\newtheorem{theorem}{Theorem}
\newtheorem{lemma}[theorem]{Lemma}
\newtheorem{definition}[theorem]{Definition}
\newtheorem{proposition}[theorem]{Proposition}
\newtheorem{remark}[theorem]{Remark}
\newtheorem{example}[theorem]{Example}
\numberwithin{theorem}{section}
\numberwithin{equation}{section}
\begin{document}

\title[Continuity and topological structural stability]{Continuity and topological structural stability for nonautonomous random attractors}

\author[T. Caraballo]{Tom\'as Caraballo$^1$}
\email{caraball@us.es}
\thanks{$^1$ Departamento de Ecuaciones Diferenciales y An\'alisis Num\'erico, Universidad de Sevilla, Campus Reina Mercedes, 41012, Sevilla, Spain.}
\author[A. N. Carvalho]{Alexandre N. Carvalho$^2$}
\thanks{$^2$ Instituto de Ci\^encias Matem\'aticas e de Computa\c c\~ao, Universidade de S\~ ao Paulo, Av. Trab. São Carlense, 400 - Centro, São Carlos - SP, 13566-590 Brazil.}
\email{andcarva@icmc.usp.br}

\author[J. A. Langa]{Jos\'e A. Langa$^1$}
\email{langa@us.es}

\author[A. N. Oliveira-Sousa]{Alexandre N. Oliveira-Sousa$^{2}$}
\email{alexandrenosousa@gmail.com}

\subjclass[2020]{Primary 35R60, 35B41, 37L55, 35B41, 37B35}


\keywords{nonautonomous random dynamical systems; continuity of attractors; topological structural stability; bounded noise; damped wave equation}
\date{}
\begin{abstract}
	In this work, we study continuity and topological structural stability of attractors for nonautonomous random differential equations obtained by small bounded random perturbations 
	of autonomous semilinear problems. First, we study existence and permanence of unstable sets of hyperbolic solutions. 
	Then, we use this to establish lower semicontinuity of nonautonomous random attractors
	and to show that the gradient structure persists under nonautonomous random perturbations. 
	Finally, we apply the abstract results in a stochastic differential equation and in a damped wave equation with a perturbation on the damping.
\end{abstract}

\maketitle

\section{Introduction}

In this paper, we study autonomous attractors under nonautonomous random perturbations. Our goal is to provide conditions to conclude continuity and topological structural stability of nonautonomous random attractors. We consider an autonomous semilinear problem in a Banach space $X$
\begin{equation}\label{newint-autonomous-differential-equation}
\dot{y}=By+f_0(y), \ \  t>0, \ y(0)=y \in X,
\end{equation} 
and its nonautonomous random perturbations of the type
\begin{equation}\label{newint-eq-nonautonomous-random-perturbation}
\dot{y}=By+f_\eta(t,\theta_t\omega,y), \  t>\tau, \ y(\tau)=y_\tau \in X, \ \eta\in (0,1],
\end{equation}  
where $B$ generates a $C^0$-semigroup $\{e^{At}: t\geq 0\}\subset \mathcal{L}(X)$, and $\theta_t:\Omega\to \Omega$ is a random flow defined in a probability space $(\Omega,\mathcal{F},\mathbb{P})$. 
\par We assume that problem \eqref{newint-autonomous-differential-equation} generates a (nonlinear) semigroup $\{T(t): t\geq 0 \}$, and that \eqref{newint-eq-nonautonomous-random-perturbation} generates a (nonlinear) nonautonomous random dynamical system $(\psi_\eta,\Theta)$, for each $\eta\in [0,1]$, and that all these dynamical systems have attractors, 
see \cite{Bixiang-Wang-existence,Bixiang-Wang-existence-stability,Bixiang-Wang-existence-upper} and the references therein for general theory and examples. 

\par One of our goals is to establish continuity of this family of  attractors. 
This is done by proving \textit{upper} and \textit{lower semicontinuity}. 
On one hand, \textbf{upper semicontinuity} means that the perturbed attractors do not become suddenly much larger than the limiting attractor (non-explosion). 
On the other hand, \textbf{lower semicontinuity} means that the perturbed attractors do not become suddenly much smaller than the limiting attractor (non-implosion). 
For an introduction to the notion of continuity of attractors see
\cite[Chapter 3]{Carvalho-Langa-Robison-book} for global and pullback attractors, and \cite[Section 4.10]{Hale-book-88} or
\cite[Chapter 8]{Babin-Vishik-book} for global attractors.
\par For nonautonomous (deterministic) dynamical systems the continuity of attractors is very well studied, 
see for instance \cite{Bortolan-Carvalho-Langa,Carvalho-Langa-2,Carvalho-Langa-Robinson,Langa-Robinson-Suarez-Vidal-Lopez-07}. In the nonautonomous random setting, the upper semicontinuity was proved in several examples, see \cite{BATES201432,Bixiang-Wang-existence,Bixiang-Wang-existence-upper} and the references therein.
However, the lower semicontinuity is more difficult to attain due to the fact that one has to prove that the \textit{inner structure} of the limiting attractor is ``preserved'' under perturbation, in order to ensure that the perturbed attractor occupies a region `as large as' the region occupied by the limiting attractor.
More precisely, the typical conditions one has to assume is that the limiting attractor is the union of the unstable sets of the equilibria and then give conditions to ensure that these equilibria and their unstable sets `persist' under perturbation, see \cite{Arrieta-Carvalho-04,Babin-Vishik-83,Bruschi-Carvalho-Cholewa,Hale-Raugel-89} for the lower semicontinuity of global attractors, and \cite{Carvalho-Langa-Robinson,Carvalho-Langa-2,Langa-Robinson-Suarez-Vidal-Lopez-07} for the lower semicontinuity of pullback attractors and \cite{Bortolan-Carvalho-Langa} for the lower semicontinuity of uniform attractors. 
In \cite{Carvalho-Langa-2} the authors study the permanence of hyperbolic global solutions and of their corresponding unstable and stable sets, in the nonautonomous setting, and in \cite{Carvalho-Langa-Robinson} the authors prove a general result on the lower semicontinuity of pullback attractors allowing the limiting pullback attractor to be given as the closure of a countable (possibly infinity) union of unstable sets of hyperbolic global solutions.

\par Thus, to prove lower semicontinuity in a nonautonomous random framework we follow this method of proving that the inner structure persists under perturbation. 
However, this is not expected to happen for general types of noises.
Actually, some works provides that the presence of an additive noise destroys the continuity of the attractors \cite{Bianchi-Blomker-Yang-16,Crauel-Flandoli-98}, see also \cite{Callaway-Doan-Lamb-Rasmussen} for an complementary study of such problems.
Hence, to obtain our results we will consider small bounded random perturbations as the one introduced in \cite{Caraballo-Carvalho-Langa-OliveiraSousa-2021}, where the authors studied the existence and permanence of hyperbolic solutions for 
\eqref{newint-eq-nonautonomous-random-perturbation} assuming that the perturbations are uniformly bounded in time. Now, inspired by the results in \cite{Carvalho-Langa-2}, 
we study the existence and continuity of the unstable sets associated with this hyperbolic solutions, and we use these results to conclude the lower semicontinuity for the attractors of 
$\{(\psi_\eta,\Theta): \eta\in [0,1]\}$, see Theorem \ref{th-continuity-nonautonomous-random-attractors}. 
In our proofs, we show how to control the random parameter using techniques of deterministic dynamical systems. 

\par The idea of reproducing the internal structure in the perturbed attractor is not only important to show continuity of attractors, 
but is also crucial to prove that the dynamics are preserved under perturbation.
For instance, in \cite{Carvalho-Langa-1} the authors provide conditions (permanence of the inner structure) to prove that dynamically gradient semigroups are stable under perturbation.
We refer to this property as \textbf{topological structural stability}.
Gradient dynamical systems were widely studied in the past years, see  \cite{Aragao-Costa-2013,Bortolan-Cardoso-etal,Bortolan-Carvalho-Langa,Bortolan-Carvalho-Langa-book,Carvalho-Langa-2,Caraballo-Carv-Langa-Rivero-gradient} for deterministic dynamical systems, and \cite{Caraballo-Langa-Liu-12,Ju-QI-JWang-18} for random dynamical systems.
In this work, we obtain a result on the topological structural stability for nonautonomous random differential equations, see Theorem \ref{th-topological-structure-stability}. This will be also a consequence of the careful study of the internal structure of these attractors.
\par We also obtain stronger results on the continuity and topological structural stability of nonautonomous random attractors for the case when the random perturbations are uniformly bounded with respect to the random parameter, see Remark \ref{remark-uniformly-bounded-noises-continuity} and Remark \ref{remark-uniformly-bounded-noises-top-struct-stab} for more details. Moreover, see \cite{Bobryk-21,Caraballo-Cruz-21} for examples of this types of noises.
%

\par We provide two applications of our abstract results. 
First, in a family of stochastic differential equations with a nonautonomous multiplicative white noise
\begin{equation}\label{eq-newintroduction-stochastic}
dy=Bydt+f_0(y)dt+\eta\kappa_t y\circ dW_t, \ t\geq \tau, \ y(\tau)=y_\tau\in X,
\end{equation}
where $\eta\in [0,1]$, and the mapping $\mathbb{R}\ni t\mapsto \kappa_t\in \mathbb{R}$ is a real function that ``controls'' the growth of the noise in time, see Subsection \ref{subsec-stochastic-diff-equations}. Finally, a nonautonomous random perturbation on the damping of a damped wave equation with Dirichlet boundary condition
\begin{equation}\label{eq-newint-damped-wave}
u_{tt}+\beta_\eta(t,\theta_t\omega) u_t-\Delta u=f(u), \ t\geq \tau, \ \eta\in [0,1],
\end{equation}
where $\{\theta_t:\Omega\to \Omega: t\in \mathbb{R}\}$ is a random flow in a probability space 
$(\Omega,\mathcal{F},\mathbb{P})$ and there exists $\beta>0$ such that $\beta_\eta$ converges to $\beta$ as 
$\eta\to 0$, see Subsection \ref{subsec-app-dff-equations-wave}.
\par Next, we describe how the paper is organized. In Section \ref{sec:preliminaries}, we recall some basic concepts of the theory of nonautonomous and random dynamical systems. Then, in Section \ref{section-randomperturbation-of-autonomous-equilibria}, we present the results on the permanence of hyperbolic solutions and in Section \ref{sec:existence-continuity-unstable-sets}, we obtain the existence and continuity of local unstable sets associated with these solutions. In Section \ref{sec:continuity-of-NR-Attractors},
we prove our result on the continuity of nonautonomous random attractors. In Section \ref{sec:top-struc-stab}, we provide a result on the topological structural stability. Finally, in Section \ref{sec-app-dff-equations}, we present applications to differential equations.

\section{Preliminaries}
\label{sec:preliminaries}

\par First, we introduce the notion of  \textit{nonautonomous random dynamical systems} 
in 
a complete separable metric space $(X,d)$. 
\begin{definition}
	Let $(\Omega,\mathcal{F},\mathbb{P})$ be a probability space. 
	We say that a family of maps $\{\theta_t:\Omega\rightarrow\Omega: \, t\in\mathbb{R}\}$ is a
	\textbf{random flow} if 
	\begin{enumerate}
		\item $\theta_0=Id_\Omega$;
		\item $\theta_{t+s}=\theta_t\circ \theta_s$, for all $t,s\in\mathbb{R}$;
		\item $\theta_t:\Omega \rightarrow \Omega$ is measurable for all $t\in\mathbb{R}$.
	\end{enumerate}
\end{definition}
\begin{definition}
	Let $\{\theta_t:\Omega\rightarrow\Omega: \, t\in \mathbb{R}\}$ be a random flow. 
	Define $\Theta_t(\tau,\omega):=(t+\tau,\theta_t\omega)$ for each $(\tau,\omega)\in\mathbb{R}\times\Omega$, and $t\in\mathbb{R}$.
	We say that a family of maps $\{\psi(t,\tau,\omega):X\to X; (t,\tau,\omega)\in\mathbb{R}^+\times\mathbb{R}\times\Omega\}$ 
	is a \textbf{nonautonomous random dynamical system (co-cycle)} driven by
	$\Theta$ if
	\begin{enumerate}
		\item the mapping
		$\mathbb{R}^+\times\Omega\times X\ni (t,\omega,x)\mapsto \psi(t,\tau,\omega)x\in X$
		is measurable
		for each fixed $\tau\in\mathbb{R}$;
		\item
		$\psi(0,\tau,\omega)=Id_X$,
		for each $(\tau,\omega)\in\mathbb{R}\times\Omega$;
		\item 
		$\psi(t+s,\tau,\omega)=\psi(t,\Theta_s(\tau,\omega))\circ\psi(s,\tau,\omega)$,
		for every $t,s\geq 0$ in 
		$\mathbb{R}$, and $(\tau,\omega)\in\mathbb{R}\times \Omega$;
		\item $\psi(t,\tau,\omega):X\to X$ is a continuous map for each  $(t,\tau,\omega)\in\mathbb{R}^+\times\mathbb{R}\times\Omega$.
	\end{enumerate}
	We usually denote the pair $(\psi,\Theta)_{(X,\mathbb{R}\times\Omega)}$, or $(\psi,\Theta)$, to denote the co-cycle $\psi$ driven by $\Theta$.
\end{definition}

\begin{remark}
	We will write $\omega_\tau:=(\tau,\omega)\in \mathbb{R}\times \Omega$, and
	$\Theta_t(\omega_\tau):=(t+\tau,\theta_t \omega)=(\theta_t\omega)_{\tau+t}$. 
\end{remark}

\par Throughout this work we will assume that a nonautonomous random dynamical system $(\psi,\Theta)$ satisfies
\begin{equation}\label{eq-aditional-assumption-on-the-cocycles}
\mathbb{R}^+\times X\ni (t,x)\mapsto \psi(t,\omega_{\tau})x\in X \hbox{ is continuous, for each } \omega_\tau\in \mathbb{R}\times \Omega.
\end{equation}
This assumption is sensible in the applications, e.g., when the co-cycle is induced by a well-posed stochastic/random differential equation. Hence, we can associate our co-cycle with a family of \textit{evolution processes}. Recall that:

\begin{definition}
	Let $\mathcal{S}=\{S(t,s) ; \, t\geq s\}$ be a family of continuous operators from $X$ into itself.
	We say that $\mathcal{S}$ is an \textbf{evolution process} in $X$ if
	 $S(t,t)=Id_X$, for all $t\in \mathbb{R}$,
	$S(t,s)S(s,\tau)=S(t,\tau)$, for $t\geq s\geq \tau$, and the mapping
	$\{(t,s)\in \mathbb{R}^2 ; \, t\geq s\}\times X\ni (t,s,x)\mapsto S(t,s)x$ is continuous.
\end{definition}

\begin{remark}\label{remark-associated-processes}
	Let $(\psi,\Theta)_{(X,\mathbb{R}\times \Omega)}$ be a nonautonomous random dynamical system which satisfies \eqref{eq-aditional-assumption-on-the-cocycles}.
	Then, for each $\omega_\tau\in \mathbb{R}\times\Omega$, we define the following \textit{evolution process} 
	\begin{equation*}
		\Psi_{\omega_\tau}:=\{\psi(t-s,\Theta_s\omega_\tau)\,; \, t\geq s \}.
	\end{equation*}
\end{remark}

\begin{definition}\label{definition-measurable-set-valued-maps}
	Let $K:\Omega\to 2^X$ be a set-valued mapping with closed nonempty images. We say that $K$ is \textbf{measurable} if the mapping
	$\Omega\ni \omega \mapsto d(x,K(\omega))$ is $(\mathcal{F}, \mathcal{B}_\mathbb{R})$-measurable for every fixed $x\in X$.
\end{definition}

\par In Definition \ref{definition-measurable-set-valued-maps}, we used that $X$ is a complete separable metric space, see \cite[Chapter III]{CastaingValadier}.


\begin{definition}
	Let $\hat{\mathcal{A}}=\{\mathcal{A}(\omega_\tau): \omega_{\tau}\in \mathbb{R}\times \Omega\}$ be a family of nonempty subsets of $X$. 
	We say that $\hat{\mathcal{A}}$ is a \textbf{nonautonomous random attractor} for $(\psi,\Theta)$
	if the following conditions are fulfilled:
	\begin{enumerate}
		
		\item $\mathcal{A}(\omega_{\tau})$ is compact, for every $\omega_\tau\in \mathbb{R}\times \Omega$;
		\item the set-valued mapping $\omega\mapsto \mathcal{A}(\tau,\omega)$ is measurable, for each $\tau \in \mathbb{R}$;
		\item $\hat{\mathcal{A}}$ is invariant, i.e., 
		$\psi(t,\omega_{\tau})\mathcal{A}(\omega_{\tau})=
		\mathcal{A}(\Theta_t\omega_{\tau})$ for every $t\geq 0$ and $\omega_{\tau}\in \mathbb{R}\times \Omega$;
		\item $\hat{\mathcal{A}}$ pullback attracts every bounded subset of $X$, i.e.,
		for every bounded subset $B$ of $X$ and $\omega_{\tau}\in \mathbb{R}\times\Omega$,
		\begin{equation*}
			\lim_{t\to +\infty}dist(\psi(t,\Theta_{-t}\omega_\tau)B, \mathcal{A}(\omega_{\tau}))=0,
		\end{equation*}
	where $dist(A,B)=\sup_{a\in A}\inf_{b\in B} d(a,b)$ is the usual Hausdorff semi-distance;
	\item $\hat{\mathcal{A}}$ is the minimal closed family that 
	pullback attracts bounded subsets of $X$, i.e., if 
	$\{F(\omega_{\tau}): \omega_\tau\in \mathbb{R}\times \Omega\}$ is a family of closed subsets of $X$ that pullback attracts every bounded subset of $X$, then $\mathcal{A}(\omega_{\tau})\subset F(\omega_{\tau})$, for every $\omega_{\tau}\in \mathbb{R}\times \Omega$.
	\end{enumerate}
	
\end{definition}

\par For existence of nonautonomous random attractors and applications to differential equations, see Wang \cite{Bixiang-Wang-existence} and the references therein.
%

\par Since we will associated our co-cycle $(\psi,\Theta)$ with a family of evolution processes as in Remark \ref{remark-associated-processes}, we recall the notion of \textit{pullback attractors}.

\begin{definition}
	Let $\mathcal{S}=\{S(t,s): t\geq s\}$ be an evolution process in $X$ and 
	$\{\mathcal{A}(t): t\in \mathbb{R}\}$ be a family of nonempty subsets of $X$. 
	We say that $\{\mathcal{A}(t): t\in \mathbb{R}\}$ is a \textbf{pullback attractor} for $\mathcal{S}$
	if
	\begin{enumerate}
		\item $\mathcal{A}(t)$ is compact, for every $t\in \mathbb{R}$;
		\item $\{\mathcal{A}(t): t\in \mathbb{R}\}$ is invariant, i.e., 
		$S(t,s)\mathcal{A}(s)=\mathcal{A}(t), \ \forall \, t\geq s$;
		\item $\{\mathcal{A}(t): t\in \mathbb{R}\}$ pullback attracts every bounded subset of $X$, i.e.,
		for every bounded subset $B$ of $X$,
		\begin{equation*}
			\lim_{s\to -\infty}dist(S(t,s)B, \mathcal{A}(t))=0;
		\end{equation*}
		\item $\{\mathcal{A}(t): t\in \mathbb{R}\}$ is the minimal closed family that 
		pullback attracts bounded subsets of $X$.
	\end{enumerate}
\end{definition}
\par There are several works that deal with existence and continuity (upper and lower semicontinuity) of pullback attractors, we refer the reader to \cite{Caraballo-Lukaszewicz-Real-06,Caraballo-Carv-Langa-Rivero-10,Carvalho-Langa-Robison-book,Bortolan-Carvalho-Langa-book}, where many other references to earlier results can be found.

\begin{remark}
	Let $(\psi,\Theta)$ be a nonautonomous random dynamical system with an attractor
	$\{\mathcal{A}(\omega_{\tau}): \omega_{\tau}\in \mathbb{R}\times \Omega\}$. 
	Then, for each $\omega_{\tau}$ fixed, the evolution process $\Psi_{\omega_{\tau}}$ has 
	a pullback attractor given by 
	$\{A(\Theta_t\omega_{\tau}): t \in \mathbb{R}\}$.
\end{remark}

Finally, we recall the definition of the \textit{unstable set} for a global solution $\xi$ of an evolution process, which was introduced in \cite{Carvalho-Langa-2}.
\begin{definition}
	Let $\mathcal{S}=\{S(t,s): t\geq s\}$ be an evolution process, and 
	$\xi:\mathbb{R}\to X$ be a \textbf{global solution} of 
	$\mathcal{S}$, i.e., $S(t,s)\xi(s)=\xi(t)$, for every $t\geq s$.
	The \textbf{unstable set} of $\xi$ is defined as
	\begin{eqnarray*}
		W^u(\xi)=\bigg\{(t,z)\in \mathbb{R}\times X: 
		\hbox{ there is a global solution } \zeta \hbox{ of } 
		\mathcal{S} \hbox{ such that }\\
		\zeta(t)=z, \hbox{ and }
		\lim_{s\to -\infty} \|\zeta(s)-\xi(s)\|_X=0
		\bigg\}.
	\end{eqnarray*}
{\bf The section of $W^u(\xi)$ at time $t\in \mathbb{R}$} is denoted by
$W^u(\xi)(t)=\{z\in X: (t,z)\in W^u(\xi)\}$.
\end{definition}

\begin{remark}
	Let $\mathcal{S}=\{S(t,s): t\geq s\}$ be an evolution process with a pullback attractor
	$\{\mathcal{A}(t): t\in \mathbb{R}\}$ such that $\cup_{t\leq 0}\mathcal{A}(t)$ is bounded.
	In this case, 
	\begin{equation}
	\mathcal{A}(t)=\bigcup \{W^u(\xi)(t): \xi \hbox{ is a backwards-bounded solution} \}, \ \forall t\in \mathbb{R},
	\end{equation}
	where $\xi$ is \textit{backwards-bounded} means that the set $\xi(-\infty,0]$ is bounded.
	Therefore, it is natural to search for the minimal collection of backwards-bounded solutions 
	whose unstable sets form the attractor. 
	Of course many backwards-bounded solutions have the same unstable set, and thus 
	it is natural to seek for backward-separated solutions, see \cite[Section 3.3]{Carvalho-Langa-Robison-book} for more details.
	In Section \ref{sec:top-struc-stab}, we will provide conditions to obtain that there is a 
	distinguished 
	set of backwards-bounded global solutions that forms the nonautonomous random attractor. 
	These conditions rely on the hyperbolicity, which we will study in the following sections. It is through this distinguished set that we will be able to address the lower semicontinuity of nonautonomous random attractors. 
\end{remark}

\section{Permanence of random hyperbolic solutions}
\label{section-randomperturbation-of-autonomous-equilibria}

\par In this section, we recall some results on the existence and continuity of hyperbolic solutions for nonautonomous random differential equations obtained in \cite{Caraballo-Carvalho-Langa-OliveiraSousa-2021}. As we will see further, these results are crucial to obtain the lower semicontinuity and topological structural stability of attractors.

\par As in \cite[Section 3]{Caraballo-Carvalho-Langa-OliveiraSousa-2021}, problems
\eqref{newint-autonomous-differential-equation} and \eqref{newint-eq-nonautonomous-random-perturbation} can be seen as the following family of semilinear differential equations on a separable Banach space $X$
\begin{eqnarray}
& &\dot{y}=By+f_0(y), \ \ y(0)=y_0,
\label{eq-autonomous-semilinear-ODE} \\
& &\dot{y}=By+f_\eta(\Theta_t\omega_\tau,y), \ \ y(0)=y_0,
\label{eq-nonautonomous-semilinear-randomODE}
\end{eqnarray}
where $\{\Theta_t: t\in \mathbb{R}\}$ is a driving flow given by 
$\Theta_t(\omega_\tau):=(t+\tau,\theta_t\omega)$ for every $\omega_\tau=(\tau,\omega)\in \mathbb{R}\times \Omega$.
\par We suppose that $f_\eta(\omega_{\tau},\cdot)\in C^1(X)$, for every $\eta\in [0,1]$, $\omega_{\tau}\in \mathbb{R}\times \Omega$, and that
\begin{equation}\label{eq-condition-f_eta-f_0}
\lim_{\eta\to 0}\sup_{(t,x)\in\mathbb{R}\times B(0,r)}\Big\{\|f_\eta(\Theta_t\omega_\tau,x)-f_0(x)\|_X+
\|(f_\eta)_x(\Theta_t\omega_\tau,x)-f_0^\prime(x)\|_{\mathcal{L}(X)}\Big\}= 0,
\end{equation}
for all $r\geq 0$ and $\omega_\tau\in \mathbb{R}\times\Omega$. 
This ensures local well-posedness and differentiability with respect to the initial conditions of \eqref{eq-autonomous-semilinear-ODE} and \eqref{eq-nonautonomous-semilinear-randomODE}, for each $\omega_{\tau}\in \mathbb{R}\times \Omega$.
\par Additionally, we assume global well-posedness, so that \eqref{eq-autonomous-semilinear-ODE} is associated with a semigroup $\mathcal{T}_0=\{T_{0}(t): t\geq 0 \}$, and that \eqref{eq-nonautonomous-semilinear-randomODE} is associated with nonautonomous random dynamical system $(\psi_\eta,\Theta)$, for each $\eta\in [0,1]$. 
 \par We say that a map $\xi:\mathbb{R}\times\Omega\to X$ is a \textbf{global solution} for $(\psi,\Theta)$
	if
	\begin{equation*}
	\psi(t,\omega_\tau)\xi(\omega_\tau)=\xi(\Theta_t\omega_\tau), \hbox{ for every }
	t\geq 0.
	\end{equation*}
   Then, for each $\omega_\tau$ fixed, the mapping 
	$\mathbb{R}\ni t\mapsto \xi(\Theta_t\omega_\tau)$
	defines a global solution for the evolution process 
	$\{\psi(t-s,\Theta_s\omega_\tau):t\geq s \}$.

	\par We are interested in the global solutions that are \textit{hyperbolic}, and to 
	define hyperbolic solutions we need to recall the concept of exponential dichotomy. 
	First, recall the definition of $\Theta$-\textit{invariance}:
	\begin{definition}
		A map $M:\mathbb{R}\times \Omega\to \mathbb{R}$ is said to be \textbf{$\Theta$-invariant} if
		for each $\omega_\tau\in \mathbb{R}\times\Omega$ we have that
		$M(\Theta_t\omega_\tau)=M(\omega_{\tau})$, for every $t\in\mathbb{R}$.
	\end{definition}
\begin{definition}
	A linear nonautonomous random dynamical system $(\varphi,\Theta)$ such that
	$\varphi(t,\tau,\omega)\in \mathcal{L}(X)$, for all 
	$(t,\tau,\omega)\in \mathbb{R}^+\times\mathbb{R}\times\Omega$, is said to admit
	an \textbf{exponential dichotomy} if 
	there exists a subset $\tilde{\Omega}$ of $\Omega$ which $\theta_t\tilde{\Omega}=\tilde{\Omega}$ and 
	$\mathbb{P}(\tilde{\Omega})=1$, and a family of projections,
	$\Pi^s:=\{\Pi^s(\omega_\tau): \omega_\tau\in \mathbb{R}\times \tilde{\Omega}\}$
	such that
	\begin{enumerate}
		\item the map $\Pi^s(\tau,\cdot):\tilde{\Omega}\to \mathcal{L}(X)$ is strongly measurable, for each $\tau\in \mathbb{R}$;
		\item $\Pi^s(\Theta_t\omega_\tau)\varphi(t,\omega_\tau)=
		\varphi(t,\omega_\tau)\Pi^s(\omega_\tau)$, for every
		$t\in \mathbb{R}^+$ and $\omega_\tau\in \mathbb{R}\times \tilde{\Omega}$;
		\item $\varphi(t,\omega_\tau):R(\Pi^u(\omega_\tau))\to R(\Pi^u(\Theta_t\omega_\tau)) $ is an isomorphism, where
		$\Pi^u(\omega_{\tau}):=Id_X-\Pi^s(\omega_{\tau})$, for all $\omega_{\tau}\in \mathbb{R}\times \Omega$;
		\item there exist $\Theta$-invariant maps
		$\alpha:\mathbb{R}\times\Omega\to (0,+\infty)$ and $M:\mathbb{R}\times\Omega\to [1,+\infty)$
		such that 
		\begin{eqnarray*}
			\|\varphi(t,\omega_\tau)\Pi^s(\omega_\tau)\|_{\mathcal{L}(X)}
			&\leq&
			M(\omega_\tau)e^{-\alpha(\omega_\tau)t}, \hbox{ for every } t\geq 0;\\
			\|\varphi(t,\omega_\tau)\Pi^u(\omega_\tau)\|_{\mathcal{L}(X)}
			&\leq&
			M(\omega_\tau)e^{\alpha(\omega_\tau)t}, \hbox{ for every } t\leq 0,
		\end{eqnarray*}
		for every $\omega_\tau\in \mathbb{R}\times\tilde{\Omega}$.
	\end{enumerate}
	In this case, the function $M$ is called a \textbf{bound} and
	$\alpha$ an \textbf{exponent} for the exponential dichotomy.
\end{definition}

For nonautonomous random dynamical systems this notion was introduced by \cite{Caraballo-Carvalho-Langa-OliveiraSousa-2021}. Also in 
\cite{Caraballo-Carvalho-Langa-OliveiraSousa-2021} the authors proved a robustness result and as an application they established existence and continuity of random hyperbolic solutions for \eqref{eq-nonautonomous-semilinear-randomODE}. 

\par Recall that $y_0^*\in X$ is a \textbf{hyperbolic equilibrium} for 
\eqref{eq-autonomous-semilinear-ODE} if the linear operator
$A:=B+f^\prime_0(y_0^*)$ generates a $C^0$-semigroup 
$\{e^{At}:t\geq 0\}$ that admits an exponential dichotomy. 
We say that $\xi$ is a \textbf{random hyperbolic solution} of \eqref{eq-nonautonomous-semilinear-randomODE} if 
	the linearized nonautonomous random dynamical system
	$(\varphi,\Theta)$, given by
	\begin{equation*}
	\varphi(t,\omega_\tau)=e^{Bt}+
	\int_{0}^te^{B(t-s)}D_xf_\eta(\Theta_s\omega_\tau,\xi(\Theta_s\omega_\tau))
	\varphi(s,\omega_\tau)ds, \forall \omega_\tau \in 
	\mathbb{R}\times \Omega,
	\end{equation*}
	admits an exponential dichotomy.

\par Now, we present a result on the permanence of hyperbolic solutions,
for the proof see \cite[Theorem 3.9]{Caraballo-Carvalho-Langa-OliveiraSousa-2021}. 

\begin{theorem}[Existence and continuity of hyperbolic solutions]
	\label{th-existence-hyperbolic-solutions-complete}
	Let $y_0^*$ be a hyperbolic equilibrium for \eqref{eq-autonomous-semilinear-ODE} and assume that \eqref{eq-condition-f_eta-f_0} holds. 
	Given $\epsilon>0$ suitable small, there exists a $\Theta$-invariant map
	$\eta_\epsilon:\mathbb{R}\times\Omega\to (0,1]$ such that:
	\begin{enumerate}
		\item 
		for each $\omega_{\tau}\in \mathbb{R}\times \Omega$ fixed, 
		given $\eta\in (0,\eta_\epsilon(\omega_\tau)]$,
		there exists a global hyperbolic solution 
		$\mathbb{R}\ni t\mapsto \zeta_\eta(t,\omega_{\tau})$
		of the evolution process
		$\{\psi_{\eta}(t-s,\Theta_s\omega_{\tau}): t\geq s\}$ satisfying
		\begin{equation}\label{eq-continuity-hyperbolic-solutions}
			\sup_{t\in \mathbb{R}}\|\zeta_{\eta}^*(t, \omega_\tau)-y_0^*\|_X<\epsilon,
		\end{equation}
		and $\zeta_\eta(t,\omega_{\tau})=\zeta_\eta(0,\Theta_t\omega_{\tau})$, for all $t\in \mathbb{R}$.
	\item  
	for each $\Theta$-invariant function
	$\bar{\eta}:\mathbb{R}\times \Omega\to [0,1]$ 
	with $\bar{\eta}(\omega_{\tau})\leq \eta_\epsilon(\omega_{\tau})$,
	there exists
	a random hyperbolic solution $\xi_{\bar{\eta}}^*:\mathbb{R}\times \Omega\to X$ of 
	$(\psi_{\bar{\eta}},\Theta)$ defined by
	\begin{equation*}
		\xi_{\bar{\eta}}^*(\omega_{\tau}):=\zeta_{\bar{\eta}(\omega_\tau)}^*(0,\omega_\tau),
	\end{equation*} 
	and satisfying \eqref{eq-continuity-hyperbolic-solutions}.
	\end{enumerate}
\end{theorem}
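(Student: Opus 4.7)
The plan is to reduce the problem to a Perron-type fixed point equation for bounded global solutions, run a parametrized contraction near $y_0^*$, and then invoke the robustness of exponential dichotomy to promote the resulting solution to a hyperbolic one.

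First, I would rewrite \eqref{eq-nonautonomous-semilinear-randomODE} in the variable $z(t) = y(t)-y_0^*$ and split the nonlinearity as
\begin{equation*}
\dot z = Az + g_\eta(\Theta_t\omega_\tau, z), \qquad A := B + f_0'(y_0^*),
\end{equation*}
where $g_\eta(\Theta_t\omega_\tau, z) := f_\eta(\Theta_t\omega_\tau, y_0^*+z) - f_0(y_0^*) - f_0'(y_0^*)z$. By hyperbolicity of $y_0^*$, the semigroup $\{e^{At}\}$ admits an exponential dichotomy with projection $\Pi^s$, constant $M_0$ and exponent $\alpha_0 > 0$. Hypothesis \eqref{eq-condition-f_eta-f_0} together with the $C^1$ smallness of $f_0(y_0^*+z)-f_0(y_0^*)-f_0'(y_0^*)z$ on a small ball gives, for each fixed $\omega_\tau$, a bound of the form $\sup_{t\in\mathbb{R}}\sup_{\|z\|\le\epsilon}\|g_\eta(\Theta_t\omega_\tau,z)\| \le \delta(\eta,\epsilon,\omega_\tau)$ with analogous control on $D_zg_\eta$, and $\delta \to 0$ as $(\eta,\epsilon)\to 0$.

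Bounded global solutions of the shifted equation correspond to fixed points in $C_b(\mathbb{R};X)$ of the Green operator
\begin{equation*}
(\mathcal{G}_{\omega_\tau,\eta}\,z)(t) = \int_{-\infty}^{t} e^{A(t-s)}\Pi^s g_\eta(\Theta_s\omega_\tau, z(s))\,ds - \int_{t}^{+\infty} e^{A(t-s)}\Pi^u g_\eta(\Theta_s\omega_\tau, z(s))\,ds .
\end{equation*}
Choosing $\epsilon$ small and then defining $\eta_\epsilon(\omega_\tau)$ to be the largest value in $(0,1]$ for which the Lipschitz estimate yields contraction constant below $1/2$ on the closed $\epsilon$-ball of $C_b(\mathbb{R};X)$, the Banach fixed point theorem produces a unique $z_\eta(\cdot,\omega_\tau)$ with $\sup_t\|z_\eta(t,\omega_\tau)\|<\epsilon$, and hence the desired $\zeta_\eta(t,\omega_\tau) := y_0^* + z_\eta(t,\omega_\tau)$ satisfying \eqref{eq-continuity-hyperbolic-solutions}. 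The cocycle identity $\zeta_\eta(t,\omega_\tau) = \zeta_\eta(0,\Theta_t\omega_\tau)$ follows by uniqueness: the shift $t\mapsto z_\eta(t+\cdot,\omega_\tau)$ is another bounded fixed point of the Green operator at $\Theta_t\omega_\tau$ (using the cocycle structure of $\Theta$), so it must coincide with $z_\eta(\cdot,\Theta_t\omega_\tau)$.

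To check that $\zeta_\eta(\cdot,\omega_\tau)$ is hyperbolic for the evolution process $\{\psi_\eta(t-s,\Theta_s\omega_\tau)\}$, I would compare the linearized cocycle $(\varphi,\Theta)$ along $\zeta_\eta$ with the autonomous one $\{e^{At}\}$. By \eqref{eq-condition-f_eta-f_0} and the uniform smallness of $\zeta_\eta - y_0^*$, the perturbation $D_xf_\eta(\Theta_s\omega_\tau,\zeta_\eta(\Theta_s\omega_\tau)) - f_0'(y_0^*)$ can be made arbitrarily small in $\mathcal{L}(X)$, uniformly in $s$, by further reducing $\eta_\epsilon(\omega_\tau)$. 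The robustness theorem for exponential dichotomies proved in \cite{Caraballo-Carvalho-Langa-OliveiraSousa-2021} then transfers the dichotomy from $\{e^{At}\}$ to $(\varphi,\Theta)$ with $\Theta$-invariant bound and exponent, giving hyperbolicity. For part (2), once $\bar\eta$ is $\Theta$-invariant with $\bar\eta\le\eta_\epsilon$, the pointwise definition $\xi_{\bar\eta}^*(\omega_\tau) := \zeta_{\bar\eta(\omega_\tau)}(0,\omega_\tau)$ inherits the cocycle identity because $\bar\eta(\Theta_t\omega_\tau) = \bar\eta(\omega_\tau)$, and hyperbolicity transfers pointwise; measurability comes from continuous dependence of the fixed point on the parameters.

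The main obstacle is ensuring that the threshold $\eta_\epsilon$ can be chosen $\Theta$-invariant and measurable in $\omega_\tau$; this is where the $\Theta$-invariance of the dichotomy constants $M(\omega_\tau),\alpha(\omega_\tau)$ and the robustness result from \cite{Caraballo-Carvalho-Langa-OliveiraSousa-2021} must be used carefully. Everything else is a controlled application of the Perron method and the contraction principle parametrized by $\omega_\tau$.
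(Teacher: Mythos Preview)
The paper does not actually give a proof of this theorem; it simply states the result and refers the reader to \cite[Theorem 3.9]{Caraballo-Carvalho-Langa-OliveiraSousa-2021}. Your proposal---translate to the variable $z=y-y_0^*$, express bounded global solutions as fixed points of the Green operator built from the dichotomy of $\{e^{At}\}$, run a contraction on the $\epsilon$-ball of $C_b(\mathbb{R};X)$, obtain the cocycle relation from uniqueness, and then upgrade to hyperbolicity via the robustness theorem for exponential dichotomies---is precisely the standard Perron method and is the approach used in that reference. Your identification of the $\Theta$-invariance of $\eta_\epsilon$ as the key structural point is also correct: it follows because the supremum in \eqref{eq-condition-f_eta-f_0} is taken over all $t\in\mathbb{R}$, so the resulting bound at $\omega_\tau$ and at $\Theta_s\omega_\tau$ coincide, and the autonomous dichotomy constants $M_0,\alpha_0$ do not depend on $\omega_\tau$ at all.
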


\par Theorem \ref{th-existence-hyperbolic-solutions-complete} is the first step to the study of existence and continuity of unstable and stable sets, which are the main tool to conclude lower semicontinuity and topological structural stability of attractors.
%

\begin{remark}
	Suppose that $\{y_1^*,\cdots,y_p^*\}$ is a set of hyperbolic equilibria for \eqref{eq-autonomous-semilinear-ODE}. 
	Then there exists $\epsilon_0>0$ such that $y_i^*$ is \textit{isolated} in $B(y_i^*,\epsilon_0)$ and
	$B(y_i^*,\epsilon_0)\cap B(y_j^*,\epsilon_0)=\emptyset$, $j\neq i$.
	Theorem \ref{th-existence-hyperbolic-solutions-complete} guarantees that for each $i\in \{1,\cdots,p\}$ and $\epsilon_0'\in (0,\epsilon_0)$ suitable small fixed, 
	there exits a $\Theta$-invariant function
	$\eta_{0,i}: \mathbb{R}\times\Omega\to (0,1]$ 
	satisfying the conclusions of Theorem 
	\ref{th-existence-hyperbolic-solutions-complete}.
	\par Define 
	$\eta_0(\omega_{\tau})=\min_{0\leq i\leq p}\{\eta_{0,i}(\omega_{\tau})\}$, for $\omega_{\tau}\in \mathbb{R}\times \Omega$. 
	Let $\omega_\tau$ be fixed, then for each $\eta\in (0,\eta_{0}(\omega_{\tau})]$
	there exists
	$\zeta_{i,\eta}^*(\cdot,\omega_\tau)$ is a hyperbolic solution of $\{\psi_\eta(t-s,\Theta_s\omega_{\tau}) :t\geq s \}$ such that 
	\begin{equation*}
		\sup_{t\in \mathbb{R}}\|\zeta_{i,\eta}^*(t,\omega_\tau)-y_i^*\|_X<\epsilon_0', \hbox{ for every } i\in \{1,\cdots,p\}.
	\end{equation*}
	
\end{remark}

\section{Existence and continuity of unstable sets}
\label{sec:existence-continuity-unstable-sets}
\par In this section, we study existence and continuity of unstable sets for the hyperbolic solutions obtained in Theorem \ref{th-existence-hyperbolic-solutions-complete}[Item (1)]. 
Under the same assumptions of Section \ref{section-randomperturbation-of-autonomous-equilibria}, we will apply the techniques of the deterministic case \cite{Carvalho-Langa-2} to our problem. The idea here is to revisit the proofs to track the dependence on the parameter $\omega_\tau\in \mathbb{R}\times \Omega$ in the arguments.
\par First, inspired in \cite{Carvalho-Langa-2}, we extend the concept of\textit{ unstable set} for nonautonomous random dynamical systems.
\begin{definition}
	Let $(\psi,\Theta)$ be a nonautonomous random dynamical system and 
	$\xi^*:\mathbb{R}\times \Omega\to X$ be a random hyperbolic solution of 
	$(\psi,\Theta)$.
	The \textbf{unstable set} of $\xi^*$ is the family
	\begin{equation*}
		W^u(\xi^*)=\{W^u(\xi^*;\omega_\tau): \omega_{\tau}\in \mathbb{R}\times \Omega\},
	\end{equation*}
	where, for each $\omega_{\tau}$, 
	$W^u(\xi^*;\omega_{\tau})$ is the unstable set of 
	the hyperbolic solution $t\mapsto \xi^*(\Theta_t\omega_{\tau})$ of the evolution process
	$\Psi_{\omega_{\tau}}=\{\psi(t-s,\Theta_{s}\omega_{\tau}): t\geq s\}$.
	{\bf The section of $W^u(\xi^*;\omega_\tau)$ at time $t\in \mathbb{R}$} is denoted by
	\begin{eqnarray*}
		W^u(\xi^*;\omega_\tau)(t)=\{z\in X: (t,z)\in W^u(\xi^*;\omega_\tau) \}.
	\end{eqnarray*}
	Let $\delta:\mathbb{R}\times \Omega\to (0,+\infty)$ be a $\Theta$-invariant map, 
	a {\bf local unstable set} is a family
	$W^{u,\delta}(\xi^*)=\{W^{u,
	\delta}(\xi^*;\omega_\tau): \omega_{\tau}\in \mathbb{R}\times \Omega\}$, where 
	\begin{eqnarray*}
		W^{u,\delta}(\xi^*;\omega_\tau)=\bigg\{(t,z)\in \mathbb{R}\times X: 	\hbox{ there is a global solution } \zeta \hbox{ of } 
		\Psi_{\omega_\tau} \hbox{ such that }\\
		\zeta(t)=z, \ \ \|\zeta(s)-\xi^*(\Theta_s\omega_\tau)\|_X\leq \delta(\omega_\tau), \ \forall s\leq t, \\
		\hbox{ and }
		\lim_{s\to -\infty} \|\zeta(s)-\xi^*(\Theta_s\omega_\tau)\|_X=0
		\bigg\},
	\end{eqnarray*}
	and {\bf the section of $W^{u,\delta}(\xi^*;\omega_\tau)$ at time $t$} is defined by 
	\begin{eqnarray*}
		W^{u,\delta}(\xi^*;\omega_\tau)(t)=\{z\in X: (t,z)\in W^{u,\delta}(\xi^*;\omega_\tau) \}.
	\end{eqnarray*}
\end{definition}

\par This definition can also be seen as an
extension of the linear case, see \cite[Theorem 2.2]{Caraballo-Carvalho-Langa-OliveiraSousa-2021}.


%
\par For the unstable set we have the following proposition.

\begin{proposition}\label{prop-properties-Wu}
	Let $(\psi,\Theta)$ be a nonautonomous random dynamical system and 
	$\xi^*:\mathbb{R}\times \Omega\to X$ be a random hyperbolic solution of 
	$(\psi,\Theta)$.
	\par For each $\omega_{\tau}\in \mathbb{R}\times \Omega$ and $t\in \mathbb{R}$,
	\begin{equation}\label{eq-relation-Wu(0,theta_tomegatau)-with-Wu(t,omegatau)}
	W^u(\xi^*;\omega_\tau)(t)=W^u(\xi^*;\Theta_{t}\omega_\tau)(0).
	\end{equation}
	Moreover, if $(\psi,\Theta)$ has a nonautonomous random attractor 
	$\{\mathcal{A}(\omega_\tau): \omega_{\tau}\in \mathbb{R}\}$ and 
	$\xi^*$ is bounded, then
	\begin{equation}
	W^u(\xi^*;\omega_\tau)(0)\subset \mathcal{A}(\omega_{\tau}),
	\ \forall \, \omega_{\tau}.
	\end{equation}
\end{proposition}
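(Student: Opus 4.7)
The plan is to handle the two assertions separately, using the cocycle identity in conjunction with a time-shift argument for (i) and the pullback attraction property for (ii).

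For the first identity, I would unwrap the definition of $W^u(\xi^*;\omega_\tau)(t)$ in terms of the evolution process $\Psi_{\omega_\tau}=\{\psi(t-s,\Theta_s\omega_\tau):t\geq s\}$. The key computation is that $\Psi_{\omega_\tau}$ and $\Psi_{\Theta_t\omega_\tau}$ are conjugate via a time shift: since $\Theta_{s'}(\Theta_t\omega_\tau)=\Theta_{s'+t}\omega_\tau$, one has
\begin{equation*}
\psi(t'-s',\Theta_{s'}(\Theta_t\omega_\tau))=\psi(t'-s',\Theta_{s'+t}\omega_\tau),
\end{equation*}
so if $\zeta:\mathbb{R}\to X$ is a global solution of $\Psi_{\omega_\tau}$ then $\tilde\zeta(\cdot):=\zeta(\cdot+t)$ is a global solution of $\Psi_{\Theta_t\omega_\tau}$. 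Given $z\in W^u(\xi^*;\omega_\tau)(t)$ with witness $\zeta$, the shifted solution $\tilde\zeta$ satisfies $\tilde\zeta(0)=\zeta(t)=z$ and
\begin{equation*}
\|\tilde\zeta(s')-\xi^*(\Theta_{s'}(\Theta_t\omega_\tau))\|_X=\|\zeta(s'+t)-\xi^*(\Theta_{s'+t}\omega_\tau)\|_X\xrightarrow{s'\to-\infty}0,
\end{equation*}
proving the inclusion $\subset$. The reverse inclusion follows by applying the same argument with $-t$ in place of $t$ and $\Theta_t\omega_\tau$ in place of $\omega_\tau$.

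For the second claim, fix $z\in W^u(\xi^*;\omega_\tau)(0)$ and a witness $\zeta$, which is a global solution of $\Psi_{\omega_\tau}$ with $\zeta(0)=z$ and $\|\zeta(s)-\xi^*(\Theta_s\omega_\tau)\|_X\to 0$ as $s\to-\infty$. Since $\xi^*$ is bounded, there exists $T_0>0$ such that $\zeta((-\infty,-T_0])$ lies in a bounded ball around the range of $\xi^*$; combined with compactness of $\zeta([-T_0,0])$, this produces a bounded set $B\subset X$ containing $\zeta((-\infty,0])$. Invariance of $\zeta$ gives
\begin{equation*}
z=\zeta(0)=\psi(T,\Theta_{-T}\omega_\tau)\zeta(-T)\in\psi(T,\Theta_{-T}\omega_\tau)B\quad\text{for every }T>0,
\end{equation*}
so the pullback attraction property yields
\begin{equation*}
d(z,\mathcal{A}(\omega_\tau))\leq \mathrm{dist}(\psi(T,\Theta_{-T}\omega_\tau)B,\mathcal{A}(\omega_\tau))\xrightarrow{T\to+\infty}0,
\end{equation*}
and compactness of $\mathcal{A}(\omega_\tau)$ forces $z\in\mathcal{A}(\omega_\tau)$.

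Neither step is particularly delicate; the only place one has to be careful is in the bookkeeping for (i), where I must make sure that the translation $\zeta\mapsto\zeta(\cdot+t)$ genuinely produces a global solution of the correctly indexed evolution process $\Psi_{\Theta_t\omega_\tau}$, since the driving flow $\Theta$ acts both on the base point $\omega_\tau$ and on the time variable inside $\xi^*(\Theta_s\omega_\tau)$. Once the cocycle identity $\Theta_{s'+t}=\Theta_{s'}\circ\Theta_t$ is used, this bookkeeping becomes automatic.
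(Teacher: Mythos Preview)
Your proposal is correct and follows essentially the same route as the paper's own proof: the time-shift $\tilde\zeta(\cdot)=\zeta(\cdot+t)$ together with the cocycle identity $\Theta_{s'+t}=\Theta_{s'}\circ\Theta_t$ for the first assertion, and backward boundedness of the witness $\zeta$ combined with pullback attraction for the second. Your justification that $\zeta((-\infty,0])$ is bounded is in fact slightly more detailed than the paper's, which simply invokes boundedness of $\{\xi^*(\Theta_t\omega_\tau):t\leq 0\}$ to conclude the same.
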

\begin{proof}
	\par First we prove \eqref{eq-relation-Wu(0,theta_tomegatau)-with-Wu(t,omegatau)}.
	Let $z\in W^u(\xi^*;\omega_\tau)(t)$, then there exists 
	a global solution $\zeta:\mathbb{R}\to X$ of $\Psi_{\omega_{\tau}}$ such that
	$\zeta (t)=z$ and $\|\zeta(s)-\xi^*(\Theta_s\omega_{\tau})\|_X\stackrel{s\to-\infty}{\longrightarrow} 0$.
	Define, $\tilde{\zeta}(s)=\zeta(t+s)$, $s\in \mathbb{R}$, thus
	$\tilde{\zeta}$ is a global solution for
	$\Psi_{\Theta_t\omega_{\tau}}$ such that
	$\tilde{\zeta}(0)=z$ and 
	\begin{equation*}
		\|\tilde{\zeta}(s)-\xi^*(\Theta_s\Theta_{t}\omega_{\tau})\|_X
		=
		\|\zeta(s+t)-\xi^*(\Theta_{s+t}\omega_{\tau})\|_X
		\stackrel{s\to-\infty}{\longrightarrow} 0.
	\end{equation*}
	Therefore, $z\in W^u(\xi^*,\Theta_t\omega_\tau)(0)$. 
	By similar arguments, we see that
	\begin{equation*}
		W^u(\xi^*,\Theta_t\omega_\tau)(0)\subset W^u(\xi^*,\omega_\tau)(t),
	\end{equation*}
	which concludes the proof of \eqref{eq-relation-Wu(0,theta_tomegatau)-with-Wu(t,omegatau)}.
	\par For the second claim, let
	$z\in W^u(\xi^*;\omega_\tau)(0)$, then there exists a 
	global solution $\zeta:\mathbb{R}\to X$ of $\Psi_{\omega_{\tau}}$ such that
	$\zeta (0)=z$ and $\|\zeta(s)-\xi^*(\Theta_s\omega_{\tau})\|_X\to 0$
	as $s\to -\infty$. 
	Since $\{\xi^*(\Theta_t\omega_{\tau}): t\in (-\infty,0]\}$ is bounded, the set 
	$B=\zeta((-\infty,0])$ is bounded and therefore
	\begin{equation}
	\lim_{s\to -\infty}
	dist_H(\psi(-s,\Theta_s\omega_{\tau})B,\mathcal{A}(\omega_{\tau}) )=0,
	\end{equation} 
	then 
	$d(z,\mathcal{A}(\omega_{\tau}))=0$   
	and 
	$z\in \mathcal{A}(\omega_{\tau})$ for each $\omega_{\tau}$.
	The proof is complete.
\end{proof}
	\par Proposition \ref{prop-properties-Wu} implies that the attractor contains all the unstable sets of hyperbolic solutions. Later, in Section \ref{sec:top-struc-stab}, we will give conditions under which the attractor is equal to the union of these unstable sets. 
	Next, we prove that the local unstable sets for these hyperbolic solutions are given as a graph, following the same line of arguments presented in \cite{Carvalho-Langa-2}. 
	In fact, 
	if $\xi_\eta^*$ is a random hyperbolic solution of $\psi_\eta$, 
	we will show that
the elements in $W^{u,\delta}_\eta(\xi_\eta^*;\omega_{\tau})$ will be those of the form 
\begin{equation*}
	(t,\xi^*_\eta(\Theta_t \omega_\tau)+\Pi^u_\eta(\Theta_t \omega_\tau)z+\Sigma^u(\omega_\tau)(t, \Pi^u_\eta(\Theta_t \omega_\tau)z))\in \mathbb{R}\times X, \hbox{ and } \|z\|_X\leq \delta(\omega_\tau),
\end{equation*}
where $\delta:\mathbb{R}\times \Omega\to (0,+\infty)$ is a $\Theta$-invariant map, for some Lipschitz map $\Sigma^u$. Moreover, we will obtain that as $\eta\to 0$ these local unstable sets ``converges'' to the unstable sets of the autonomous problem \eqref{eq-autonomous-semilinear-ODE}.
\par Let $\omega_{\tau}$ be fixed, $\eta\in (0,\eta_0(\omega_{\tau})]$, and
 $t\mapsto \xi_\eta^*(\Theta_t\omega_{\tau})$ a hyperbolic solution of 
$\{\psi_{\eta}(t-s,\Theta_s\omega_{\tau}): t\geq s \}$ obtained by Theorem 
\ref{th-existence-hyperbolic-solutions-complete}. Then, the change of variables $z(t)=y(t)-\xi_{\eta}^*(\Theta_t\omega_\tau)$
allows us to concentrate on the existence of invariant sets of global hyperbolic solutions around the zero solution for 
\begin{equation}\label{eq-equation-around-zero-equilibria}
\dot{z}=Az+B_{\eta}(\Theta_t\omega_{\tau})z+ h_\eta(\Theta_t\omega_\tau,z),  \ \  z(s)=z_0\in X,
\end{equation}
where $A=B+f_0^\prime(y^*)$, $B_{\eta}(\omega_{\tau})=(f_{\eta})_x(\omega_{\tau},\xi_{\eta}^*(\omega_{\tau}))-f_0^\prime(y^*)$ and 
\begin{equation*}
	\begin{split}
		h_\eta(\Theta_t\omega_\tau,z):=&f_\eta(\Theta_t\omega_\tau,\xi_{\eta}^*(\Theta_t\omega_\tau)+z)-f_\eta(\Theta_t\omega_\tau,\xi_{\eta}^*(\Theta_t\omega_\tau)) \\
		&- (f_\eta)_z(\Theta_t\omega_\tau,\xi_{\eta}^*(\Theta_t\omega_\tau))z.
		\end{split}
\end{equation*} 
Thus $z=0$ is a globally defined bounded solution for \eqref{eq-equation-around-zero-equilibria} where $h_\eta(\omega_{\tau},\cdot):X\to X$ differentiable with
$h_\eta(\omega_{\tau},0)=0$, $(h_\eta)_x(\omega_\tau,0)=0\in\mathcal{L}(X)$, for all 
$\eta\in [0,\eta_{0}(\omega_{\tau})]$.
Furthermore, \eqref{eq-condition-f_eta-f_0} implies that
\begin{equation}\label{eq-condition-h_eta-h_0}
\lim_{\eta\to 0}\sup_{(t,x)\in\mathbb{R}\times B(0,r)}\Big\{\|h_\eta(\Theta_t\omega_\tau,x)-h_0(x)\|_X+
\|(h_\eta)_x(\Theta_t\omega_\tau,x)-h_0^\prime(x)\|_{\mathcal{L}(X)}\Big\}= 0,
\end{equation}
for all $r> 0$ and $\omega_{\tau}\in \mathbb{R}\times\Omega$.
\par We recall that, $\eta_0$ is chosen such that 
the linear evolution process
$\{\varphi_\eta(t-s,\Theta_{s}\omega_{\tau}): t\geq s\}$,
 given by 
\begin{equation}\label{eq-linear-nonautonomous-random-DS-epsilon}
\varphi_\eta(t-s,\Theta_{s}\omega_{\tau})= e^{A(t-s)}+\int_{s}^{t} e^{A(t-r)}B_\eta(\Theta_r\omega_\tau) \varphi_\eta(r-s,\Theta_s\omega_{\tau})\, dr, \ t\geq s,
\end{equation}
admits an exponential dichotomy with 
bound 
$M_\eta$, exponent $\alpha_\eta$ and family of projections
$\{\Pi^u_\eta(t): t\in \mathbb{R} \}$, for every $\eta\in (0,\eta_{0}(\omega_{\tau})]$, see \cite[Theorem 3.9]{Caraballo-Carvalho-Langa-OliveiraSousa-2021} for details. 
Moreover, for each $\Theta$-invariant function 
$\bar{\eta}: \mathbb{R}\times \Omega \to [0,1]$, with 
$\bar{\eta}(\omega_{\tau})\leq \eta_{0}(\omega_{\tau})$, the co-cycle
$(\varphi_{\bar{\eta}},\Theta)$ admits an exponential dichotomy 
with 
bound 
$M_{\bar{\eta}}$, exponent $\alpha_{\bar{\eta}}$ and family of projections
$\{\Pi^u_{\bar{\eta}}(\omega_{\tau}):\omega_\tau\in \mathbb{R} \times \Omega \}$.

\par If $z$ is a solution of 
\eqref{eq-equation-around-zero-equilibria}
we write $z^u(t)=\Pi^u_\eta(t)z(t)$ and
$z^s(t)=\Pi^s_\eta(t)z(t)$, $t\in \mathbb{R}$, 
where $\Pi^u_\eta(t)=Id_X-\Pi_\eta^s(t)$, $t\in \mathbb{R}$.
 Then $z^u$ and $z^s$ are the solutions of
\begin{equation}\label{eq-equation-around-zero-equilibria-projected}
\begin{split}
&\dot{z}^u=A_{\eta}(\Theta_t\omega_{\tau})z^u+ h_\eta^u(\Theta_t\omega_\tau,z^u(t)+z^s(t)), \\
&\dot{z}^s=A_{\eta}(\Theta_t\omega_{\tau})z^s+ h_\eta^s(\Theta_t\omega_\tau,z^u(t)+z^s(t)),
\end{split}
\end{equation}
where $A_{\eta}(\omega_{\tau})=A+B_{\eta}(\omega_{\tau})$, and $h_\eta^k(\omega_\tau,\cdot)=\Pi_\eta^k(\omega_{\tau})
h_\eta(\omega_\tau,\cdot), \ \ k=u,s$.

\par Since, for each $\omega_{\tau}$ fixed, $h_\eta^k(\Theta_t\omega_{\tau},0)=0$, with $(h_\eta^k)_x(\Theta_t\omega_{\tau},0)=0$ and $h_\eta^k$ are continuous differentiable in $X$, uniformly with respect to $t$, we obtain that given $\rho>0$ there exists $\delta_0(\omega_{\tau})>0$
such that 
if $\|z\|_X,\|\tilde{z}\|_X\leq \delta_0(\omega_{\tau})$ then  
\begin{equation}\label{eq-lipschitiz-condition-on-h^k}
\begin{split}
&\sup_{t\in \mathbb{R}}\|h^k_\eta(\Theta_t\omega_{\tau},z)\|_X\leq \rho, \\
&\sup_{t\in \mathbb{R}}\|h^k_\eta(\Theta_t\omega_{\tau},z)-h^k_\eta(\Theta_t\omega_{\tau},\tilde{z})\|\leq \rho \|z-\tilde{z}\|_X, \ \ k=s,u.
\end{split}
\end{equation}
Note that, from \eqref{eq-condition-h_eta-h_0}, it is possible to choose $\delta_0:\mathbb{R}\times\Omega\to (0,+\infty)$ as a $\Theta$-invariant function. 
This is one of the main differences to the deterministic case and to work with the $\Theta$-invariance is the key to our further results.
\begin{remark}\label{remark-cutoff-out-ngh}
	For each $\omega_\tau$ fixed, it is possible to extend $h^u_\eta(\omega_{\tau},\cdot),h^s_\eta(\omega_\tau,\cdot)$ outside the ball of radius $\delta_0(\omega_{\tau})$ such that this extension satisfies both conditions in \eqref{eq-lipschitiz-condition-on-h^k} for all $z,\tilde{z}\in X$, see \cite{Carvalho-Langa-2}.
	Therefore, 
	we obtain the existence and continuity of unstable and stable set, as a graph, for $h_\eta^u$ and $h_\eta^s$ satisfying \eqref{eq-lipschitiz-condition-on-h^k}, for all $z,\tilde{z}\in X$, 
	then, using a localization procedure, we conclude existence and continuity of local unstable sets, as a graph, for the case when $h_\eta^k$ satisfies \eqref{eq-lipschitiz-condition-on-h^k} in the ball of radius $\delta(\omega_{\tau})$, for each $\omega_{\tau}\in \mathbb{R}\times \Omega $.
\end{remark}

\par  
%

\par Assuming that \eqref{eq-lipschitiz-condition-on-h^k} holds for all $z,\tilde{z}\in X$, we will obtain that, for all suitably small $\rho$, the unstable sets are graphs of Lipschitz maps in the class defined next.
Given $L>0$ and a family of projections $\{\Pi^u(s): s\in \mathbb{R}\}$. 
Denote by $\mathcal{LB}(L)$ a complete metric space of all bounded and globally Lipschitz continuous functions 
$\Sigma:\mathbb{R}\times X\to X$ such that 
$\mathbb{R}\times X\ni (s,z)\mapsto \Sigma(s,z):=\Sigma(s,\Pi^u(s)z)\in \Pi^s(s)X$ 
and
\begin{equation}
\begin{aligned}
&\sup \big\{\|\Sigma(s,\Pi^u(s)z)\|_X; (s,z) \in \mathbb{R}\times X \big\}\leq L,\\
&\|\Sigma(s,\Pi^u(s)z)- \Sigma(s,\Pi^u(s)\tilde{z})\|_X\leq L\|\Pi^u(s)z- \Pi^u(s)\tilde{z}\|_X,
\end{aligned}
\end{equation}
with distance between $\Sigma, \widetilde{\Sigma}\in \mathcal{LB}(L)$ given by
\begin{equation}
|\!|\!|\Sigma-\tilde{\Sigma}|\!|\!|:=\sup_{(t,z)\in \mathbb{R}\times X} \|\Sigma(t,z)- \tilde{\Sigma}(t,z)\|_X.
\end{equation}

\begin{theorem}\label{th-existence-continuity-unstable-set}
	Let $\omega_{\tau}\in \mathbb{R}\times \Omega$ be fixed, and $\eta\in [0,\eta_0(\omega_{\tau})]$.
	Suppose that $\rho>0$ is suitable small such that there is 
	$L=L(\rho,\alpha_\eta,M_\eta)>0$ satisfying
	\begin{equation}\label{eq-hypotheses-existence-unstable}
	\begin{aligned}
	&\frac{\rho M_\eta}{\alpha_\eta}\leq L, \ \ \ 
	\frac{\rho M_\eta}{\alpha_\eta}(1+L)<1\\
	&\frac{\rho M_\eta^2 (1+L)}{\alpha_\eta-\rho M_\eta 
	(1+ L)}
	\leq L, \\ 
	&\rho M_\eta +  
	\frac{\rho^2 M_\eta^2 (1+L)(1+M_\eta)}{2\alpha_\eta-\rho M_\eta 
	(1+ L)}< \frac{\alpha_\eta}{2}.
	\end{aligned}
	\end{equation}
	Then, for each 
	$\omega_{\tau}\in \mathbb{R}\times\Omega$ fixed and 
	$\eta\in (0,\eta_{0}(\omega_{\tau})]$, 
	there exists 
	$\Sigma^u_\eta=\Sigma_{\eta,\omega_\tau}^u\in \mathcal{LB}(L)$, such that the unstable set of the zero solution of \eqref{eq-equation-around-zero-equilibria}
	is given by
	\begin{equation}
	W^u_\eta(0)=\{(s,z)\in \mathbb{R}\times X: z=\Pi^u_\eta(s)z+\Sigma_\eta^u(s,\Pi^u_\eta(s)z)\},
	\end{equation}
	and, for any $r> 0$ and $s\in \mathbb{R}$,
	\begin{equation}\label{eq-hypothesis-to-obtain-continuity-unstable}
	\sup_{t\leq s}\sup_{\|z\|_X\leq r} \{\|\Pi^u_\eta(t) z-\Pi^u_0 z\|_X+
	\|\Sigma^{u}_\eta(t,\Pi^u_\eta(t)z) - \Sigma^{u}_0(\Pi^u_0 z)\|_X\}\stackrel{\eta\to 0}{\to } 0.
	\end{equation}
	\par Furthermore, 
	if $\zeta(t)=\zeta^u(t)+\zeta^s(t)$, where $\zeta^k(t)=\Pi_\eta^k(t)\zeta(t)$, for $k=u,s$, is a backward-bounded global solution of \eqref{eq-equation-around-zero-equilibria}, then there is 
	$\gamma>0$ such that,
	\begin{equation}\label{eq-expontial-attraction-backwards}
	\|\zeta^s(t)-\Sigma_\eta^u(t,\zeta^u(t))\|_X\leq 
	M_\eta
	e^{-\gamma(t-s)}\|\zeta^s(s)-\Sigma_\eta^u(s,\zeta^u(s))\|_X, \ t\geq s.
	\end{equation}
\end{theorem}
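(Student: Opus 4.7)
The plan is to apply the Lyapunov--Perron method. Fix $\omega_\tau\in\mathbb{R}\times\Omega$, $\eta\in(0,\eta_0(\omega_\tau)]$, and $s\in\mathbb{R}$. For $\xi\in X$, I would look for a backwards-decaying global solution $\zeta$ of the projected system \eqref{eq-equation-around-zero-equilibria-projected} satisfying $\Pi^u_\eta(\Theta_s\omega_\tau)\zeta(s)=\Pi^u_\eta(\Theta_s\omega_\tau)\xi$ and $\|\zeta(t)\|_X\to 0$ as $t\to-\infty$. Using the exponential dichotomy of $\varphi_\eta$ and variation of constants, such a $\zeta$, if it exists, must satisfy on $(-\infty,s]$ the integral equation
\begin{equation*}
\begin{split}
(T^\eta_{s,\xi}\zeta)(t):=&\,\varphi_\eta(t-s,\Theta_s\omega_\tau)\Pi^u_\eta(\Theta_s\omega_\tau)\xi-\int_t^s\varphi_\eta(t-r,\Theta_r\omega_\tau)\Pi^u_\eta h_\eta(\Theta_r\omega_\tau,\zeta(r))\,dr\\
&+\int_{-\infty}^t\varphi_\eta(t-r,\Theta_r\omega_\tau)\Pi^s_\eta h_\eta(\Theta_r\omega_\tau,\zeta(r))\,dr,
\end{split}
\end{equation*}
where both integrals converge by the dichotomy bounds together with \eqref{eq-lipschitiz-condition-on-h^k}.

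Working in the Banach space $\mathcal{E}_s:=\{\zeta\in C((-\infty,s],X):|\zeta|_*:=\sup_{t\leq s}\|\zeta(t)\|_X<\infty\}$, the dichotomy estimates and \eqref{eq-lipschitiz-condition-on-h^k} show that under \eqref{eq-hypotheses-existence-unstable} the operator $T^\eta_{s,\xi}$ maps $\mathcal{E}_s$ into itself and is a strict contraction: the first two inequalities give the self-mapping property and contractivity, the third yields a Lipschitz constant $\leq L$ for the fixed point $\zeta^*_{s,\xi,\eta}$ with respect to $\xi$, and the fourth provides a positive exponential contraction rate needed later for the attraction. Setting $\Sigma^u_\eta(s,\xi):=\Pi^s_\eta(\Theta_s\omega_\tau)\zeta^*_{s,\xi,\eta}(s)$, direct estimates give $\|\Sigma^u_\eta(s,\cdot)\|_X\leq L$ and Lipschitz constant $\leq L$, so $\Sigma^u_\eta\in\mathcal{LB}(L)$. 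Uniqueness of the fixed point together with the invariance of the integral equation under the flow then identifies $W^u_\eta(0)$ with the graph of $\Sigma^u_\eta$ as stated.

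For the continuity \eqref{eq-hypothesis-to-obtain-continuity-unstable}, I would subtract the fixed-point identities for $\eta$ and $0$ at the same $\xi$ and bound the difference via the contraction. The quantitative robustness in \cite[Theorem 3.9]{Caraballo-Carvalho-Langa-OliveiraSousa-2021} delivers $\Pi^u_\eta\to\Pi^u_0$, convergence of $\varphi_\eta\to\varphi_0$ on bounded time sets, and uniform dichotomy data as $\eta\to 0$; together with the convergence $h_\eta\to h_0$ from \eqref{eq-condition-h_eta-h_0}, the contraction-with-parameter principle gives $|\zeta^*_{s,\xi,\eta}-\zeta^*_{s,\xi,0}|_*\to 0$ uniformly on bounded sets of $\xi$ and $s\leq 0$, from which \eqref{eq-hypothesis-to-obtain-continuity-unstable} follows. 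For the exponential attraction \eqref{eq-expontial-attraction-backwards}, let $\zeta=\zeta^u+\zeta^s$ be a backwards-bounded global solution and set $v(t):=\zeta^s(t)-\Sigma^u_\eta(t,\zeta^u(t))$. Evolving $v$ forward using $\dot{\zeta}^s=A_\eta(\Theta_t\omega_\tau)\zeta^s+h^s_\eta(\Theta_t\omega_\tau,\zeta(t))$, the invariance of the graph of $\Sigma^u_\eta$, the stable dichotomy estimate with rate $\alpha_\eta$, and the Lipschitz bound on $h^s_\eta$, a Gronwall argument produces \eqref{eq-expontial-attraction-backwards} with $\gamma$ essentially $\alpha_\eta-\rho M_\eta(1+L)>0$.

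The main obstacle is the continuity step: one must simultaneously control the dichotomy data $M_\eta,\alpha_\eta,\Pi^u_\eta$, the evolution operators, and the nonlinearities $h_\eta$ as $\eta\to 0$, all while keeping the contraction constant uniformly bounded away from $1$. This is feasible precisely because the quantitative robustness result in \cite[Theorem 3.9]{Caraballo-Carvalho-Langa-OliveiraSousa-2021} provides explicit estimates on how the perturbed dichotomy and its projections deviate from the unperturbed ones. A secondary subtle point is that the cut-off extension of Remark \ref{remark-cutoff-out-ngh} is what permits the globally defined sup norm on $\mathcal{E}_s$ to be used, so that the resulting unstable set is the global graph of $\Sigma^u_\eta\in\mathcal{LB}(L)$ rather than merely a local object.
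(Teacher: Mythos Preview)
Your Lyapunov--Perron argument is correct and is exactly the method underlying the cited result: the paper does not prove this theorem at all but simply states that it ``follows directly from \cite[Theorem~3.1]{Carvalho-Langa-2}'', and your sketch reproduces the essential steps of that reference (fixed-point construction in the weighted sup-norm space, contraction-with-parameter for the continuity, and Gr\"onwall for the exponential attraction). So your approach and the paper's coincide, with the only difference being that the paper outsources the details while you have spelled them out.
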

\par Theorem \ref{th-existence-continuity-unstable-set} follows directly from 
\cite[Theorem 3.1]{Carvalho-Langa-2}.

\par From Theorems \ref{th-existence-hyperbolic-solutions-complete} and \ref{th-existence-continuity-unstable-set}, we can obtain the existence and continuity of local unstable sets. 
%

\begin{theorem}[Existence and continuity of local unstable set]
	\label{th-existence-continuity-local-unstable-set}
	\par Let $\eta\in [0,1]$, and
	$h_\eta:\mathbb{R}\times \Omega\times X\to X$ by such that 
	for each $\omega_{\tau}$, the mapping
	$z\mapsto h_\eta(\omega_{\tau},z)$ is continuously differentiable. 
	Consider 
	\begin{equation}\label{eq-theorem-existence-continuity-local-Wu}
	\dot{z}=A_\eta(\Theta_t\omega_{\tau})z+
	h_\eta(\Theta_{t}\omega_\tau,z), \ \ \omega_{\tau} \in \mathbb{R}\times \Omega.
	\end{equation}
	Assume that
	$h_\eta(\omega_{\tau},0)=0$, $(h_\eta)_x(\omega_\tau,0)=0\in\mathcal{L}(X)$,
	$h_0:X\to X$, $A_0(\Theta_t\omega_{\tau})=A$, 
	$\{h_\eta\}_{\eta\in [0,1]}$ satisfies
	\eqref{eq-condition-h_eta-h_0}, and that
	$z_0^*=0$ is a hyperbolic solution of \eqref{eq-theorem-existence-continuity-local-Wu} for $\eta=0$.
	Then given $\epsilon_0>0$ suitable small, the following hold:
	\begin{enumerate}
		\item There exist a $\Theta$-invariant function $\eta_{0}:\mathbb{R}\times \Omega\to [0,1]$ such that $z_\eta^*=0$ is a hyperbolic solution of 
		\eqref{eq-theorem-existence-continuity-local-Wu},
		for each $\eta\in (0,\eta_0(\omega_{\tau})]$.
		In particular, 
		the linear evolution process $\{\varphi_\eta(t-s,\Theta_s\omega_{\tau}): t\geq s\}$,
		associated to the linear part of 
		\eqref{eq-theorem-existence-continuity-local-Wu} 
		(corresponding to the linearization of 
		$\{\psi_{\eta}(t-s,\Theta_s\omega_{\tau}): t\geq s \}$ around
		 $\xi_\eta^*(\Theta_{t}\omega_{\tau})$), 
		 admits an
		 exponential dichotomy with family of projections $\{\Pi_\eta^u(s): s\in \mathbb{R} \}$.
		\item The families of projections
		$\Pi^u_\eta=\{\Pi^u_\eta(s): s\in \mathbb{R}\}$, $\eta\in  (0,\eta_0(\omega_{\tau})]$ 
		satisfy
		\begin{equation}
		\lim_{\eta\to 0} \sup_{t\in \mathbb{R}} \|\Pi^u_\eta(t) -\Pi^u_0\|_{\mathcal{L}(X)}=0.
		\end{equation}
		\item
		There exist $\Theta$-invariant function
		$\delta_0:\mathbb{R}\times \Omega\to (0,+\infty)$
		 (independent of $\eta$) 
		 such that 
		  for each $\omega_{\tau}$ and 
		 $\eta\in [0,\eta_0(\omega_\tau)]$, and
		 a map
		 \begin{equation}
		 \mathbb{R}\times B_X(0,\delta_0(\omega_{\tau}))\ni (s,z)\mapsto \Sigma^u_\eta(s,z):=\Sigma^{u}_\eta(s, \Pi^u_\eta(s)z),
		 \end{equation}
		 with the property:
		 given $\delta\in (0,\delta_0(\omega_{\tau}))$, there exists $0<\delta''<\delta'<\delta$, 
		\begin{equation}\label{eq-local-unstable-as-graph}
		\begin{split}
		&\{\Pi^u_\eta(s)z+ \Sigma^{u}_{\eta}(s,\Pi^u_\eta(s)z) \,:\, \|z\|_X\leq \delta''\} \subset \\
		&W^{u,\delta'}_{\eta}(0)(s)\subset \\
		&\{ \Pi^u_\eta(s)z+ \Sigma^{u}_{\eta}(s,\Pi^u_\eta(s)z) \,:\, \|z\|_X\leq \delta\}.
		\end{split}
		\end{equation}
		\item For each $\omega_{\tau}$ fixed, the family of graphs of the maps $\{\Sigma_\eta\}_{\eta\in (0,\eta_{0}(\omega_{\tau})]}$ behaves continuously at $\eta=0$:
		\begin{equation}
		\sup_{t\leq s}\sup_{\|z\|\leq \delta_0(\omega_\tau)} \{\|\Pi^u_\eta(t)-\Pi^u_0\|_{\mathcal{L}(X)}
		+
		\|\Sigma^{u}_\eta(t,\Pi^u_\eta(t)z) - \Sigma^{u}_0(\Pi^u_0 z)\|_X\}\stackrel{\eta\to 0}{\to } 0, \ \forall \, s\in \mathbb{R}.
		\end{equation}
	\end{enumerate}
\end{theorem}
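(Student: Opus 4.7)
The plan is to combine Theorem \ref{th-existence-hyperbolic-solutions-complete} with Theorem \ref{th-existence-continuity-unstable-set} via a $\Theta$-invariant localization procedure, working one $\omega_\tau$ at a time while tracking the parameter carefully. First, I would invoke Theorem \ref{th-existence-hyperbolic-solutions-complete} applied to \eqref{eq-theorem-existence-continuity-local-Wu}. Since $z_0^*=0$ is a hyperbolic solution for $\eta=0$ and the family $\{h_\eta\}$ satisfies \eqref{eq-condition-h_eta-h_0}, we obtain a $\Theta$-invariant function $\eta_0:\mathbb{R}\times\Omega\to [0,1]$ such that for every $\eta\in(0,\eta_0(\omega_\tau)]$ there is a hyperbolic solution of \eqref{eq-theorem-existence-continuity-local-Wu} close to $0$. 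Because $(h_\eta)_x(\omega_\tau,0)=0$, evaluating the linearization at this hyperbolic solution yields the linearized cocycle $(\varphi_\eta,\Theta)$ satisfying \eqref{eq-linear-nonautonomous-random-DS-epsilon}, and the robustness theorem of \cite{Caraballo-Carvalho-Langa-OliveiraSousa-2021} gives its exponential dichotomy together with the uniform convergence $\sup_{t\in\mathbb{R}}\|\Pi^u_\eta(t)-\Pi^u_0\|_{\mathcal{L}(X)}\to 0$ as $\eta\to 0$. This settles items (1) and (2).

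Next, to obtain the graph representation in (3), I would perform a $\Theta$-invariant cut-off. Using \eqref{eq-condition-h_eta-h_0} and the fact that $(h_\eta)_x(\omega_\tau,0)=0$ for each $\omega_\tau$, one can choose, for any prescribed $\rho>0$, a $\Theta$-invariant radius $\delta_0:\mathbb{R}\times\Omega\to(0,+\infty)$ such that \eqref{eq-lipschitiz-condition-on-h^k} holds uniformly in $t\in\mathbb{R}$ on the ball $B_X(0,\delta_0(\omega_\tau))$ (the $\Theta$-invariance is possible precisely because the convergence in \eqref{eq-condition-h_eta-h_0} is uniform in $t$ for fixed $\omega_\tau$, and after taking $\omega_\tau\mapsto\Theta_s\omega_\tau$ the estimate is preserved). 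As explained in Remark \ref{remark-cutoff-out-ngh}, one then extends $h^u_\eta(\omega_\tau,\cdot),h^s_\eta(\omega_\tau,\cdot)$ outside $B_X(0,\delta_0(\omega_\tau))$ so that the bounds in \eqref{eq-lipschitiz-condition-on-h^k} hold globally in $X$ with the same $\rho$. Choosing $\rho$ small enough so that \eqref{eq-hypotheses-existence-unstable} is satisfied, Theorem \ref{th-existence-continuity-unstable-set} supplies a Lipschitz map $\Sigma^u_\eta=\Sigma^u_{\eta,\omega_\tau}\in \mathcal{LB}(L)$ whose graph equals the (global) unstable set of the zero solution of the modified equation. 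Restricting this graph to $\|z\|_X\leq\delta_0(\omega_\tau)$ defines the desired map in item (3), and the convergence in \eqref{eq-hypothesis-to-obtain-continuity-unstable} from Theorem \ref{th-existence-continuity-unstable-set}, combined with item (2), gives the continuity statement in item (4).

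It remains to verify the two-sided inclusion \eqref{eq-local-unstable-as-graph}, which is the step I anticipate being the most delicate. The upper inclusion is immediate: any element of $W^{u,\delta'}_\eta(0)(s)$ comes from a backward-bounded solution of the original equation that never leaves $B_X(0,\delta')\subset B_X(0,\delta_0(\omega_\tau))$, hence is also a backward-bounded solution of the cut-off equation, and therefore lies on the graph of $\Sigma^u_\eta$ with $\|\Pi^u_\eta(s)z\|_X\leq C\delta$ for a constant depending on the dichotomy bound $M_\eta$. For the lower inclusion, given $z$ with $\|z\|_X\leq\delta''$ for a sufficiently small $\delta''=\delta''(\delta,M_\eta,\alpha_\eta)$, one starts with the point $\Pi^u_\eta(s)z+\Sigma^u_\eta(s,\Pi^u_\eta(s)z)$, follows the backward solution on the invariant graph provided by Theorem \ref{th-existence-continuity-unstable-set}, and uses the exponential attraction estimate \eqref{eq-expontial-attraction-backwards} (applied backward) together with the Lipschitz bound on $\Sigma^u_\eta$ to control the backward orbit and keep it inside $B_X(0,\delta')$ for all past times. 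This ensures that the backward solution never reaches the region where the cut-off modified $h_\eta$, so it is a genuine backward solution of the original equation, and the limit $\|\zeta(t)-0\|_X\to 0$ as $t\to-\infty$ follows from the exponential decay on the unstable manifold. The main obstacle is exactly this quantitative step: choosing $\delta''$ uniformly in $\eta\in[0,\eta_0(\omega_\tau)]$ requires tracking how the constants $M_\eta,\alpha_\eta$ behave, which is feasible because Theorem \ref{th-existence-hyperbolic-solutions-complete} provides $M_\eta,\alpha_\eta$ close to the autonomous constants $M_0,\alpha_0$ for $\eta$ small, so $\delta''$ can be chosen depending only on $(M_0,\alpha_0,\delta,\omega_\tau)$ through the $\Theta$-invariant quantity $\delta_0(\omega_\tau)$.
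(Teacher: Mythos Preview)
Your proposal follows the paper's proof almost exactly: items (1)--(2) via Theorem \ref{th-existence-hyperbolic-solutions-complete} and the robustness/continuity of projections, the $\Theta$-invariant choice of $\delta_0$ and cut-off via Remark \ref{remark-cutoff-out-ngh}, and then Theorem \ref{th-existence-continuity-unstable-set} to obtain $\Sigma^u_\eta$ and the convergence in item (4). The only substantive work, as you correctly identify, is the two-sided inclusion \eqref{eq-local-unstable-as-graph}.

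There is one concrete slip in your treatment of the lower inclusion. You propose to control the backward orbit on the graph using \eqref{eq-expontial-attraction-backwards} ``applied backward''. But for a solution $\zeta$ lying on the graph one has $\zeta^s(t)=\Sigma^u_\eta(t,\zeta^u(t))$ for all $t$, so both sides of \eqref{eq-expontial-attraction-backwards} vanish identically and the estimate gives no information. What is actually needed (and what the paper does) is a direct Gr\"onwall argument on the unstable component: writing the variation-of-constants formula for $\zeta^u(t)=\Pi^u_\eta(t)\zeta(t)$ with $t\leq s$ and using the dichotomy bound together with \eqref{eq-lipschitiz-condition-on-h^k}, one obtains
\[
\|\zeta^u(t)\|_X\leq M_\eta\, e^{(\alpha_\eta-\rho M_\eta(1+L))(t-s)}\|\zeta^u(s)\|_X,\qquad t\leq s,
\]
and then $\|\zeta(t)\|_X\leq M_\eta^2(1+L)\,e^{(\alpha_\eta-\rho M_\eta(1+L))(t-s)}\|\zeta(s)\|_X$ after combining with the Lipschitz bound $\|\Sigma^u_\eta(t,\zeta^u(t))\|_X\leq L\|\zeta^u(t)\|_X$. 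This single estimate handles \emph{both} inclusions at once: it shows that any backward-bounded solution on the graph through a point of size $\delta'=\delta/(M_\eta^2(1+L))$ stays in the $\delta$-ball, and iterating gives $\delta''$. Your remark about uniformity in $\eta$ is then immediate from this explicit formula, since $M_\eta,\alpha_\eta,L$ are controlled by the limiting constants.
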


\begin{proof}
	\par Item (1) is a corollary of Theorem \ref{th-existence-hyperbolic-solutions-complete} and
	Item (2) follows from the continuous dependence of projections, in the sense of 
	\cite[Theorem 7.9]{Carvalho-Langa-Robison-book} for evolution processes (see also \cite[Theorem 2.23]{Caraballo-Carvalho-Langa-OliveiraSousa-2021} for nonautonomous random dynamical systems).
	\par By hypotheses, let $\rho>0$ be such that there is $L$ satisfying
	\eqref{eq-hypotheses-existence-unstable}, 
	then there exists 
	$\delta_0(\omega_{\tau})$ such that 
	\eqref{eq-lipschitiz-condition-on-h^k} is satisfied for
	$z,\bar{z}\in B_X(0,\delta_0(\omega_\tau))$.
	\par According to
	Remark \ref{remark-cutoff-out-ngh} and Theorem \ref{th-existence-continuity-unstable-set}, by a cut-off procedure, 
	we obtain the desired function $\Sigma_\eta^u:\mathbb{R}\times B_X(0,\delta_0(\omega_\tau))\to X$, for each 
	$\eta\in (0,\eta_{0}(\omega_{\tau})]$.
	\par Thus, we only need to check \eqref{eq-local-unstable-as-graph}.
	We claim that given $\delta\in (0,\delta_0(\omega_\tau))$, 
	there exists
	$\delta^\prime<\delta$
	such that 
	any global solution 
	$\zeta:\mathbb{R}\to X$ of 
	$\{\psi_\eta(t-s,\Theta_s\omega_{\tau}): t\geq s\}$ on the unstable set
	such that
	$\|\zeta(s)\|\leq \delta^\prime$ must satisfy
	$\|\zeta(t)\|\leq \delta$, for $t\leq s$.
	\par Indeed, from \eqref{eq-equation-around-zero-equilibria-projected}, $\zeta^u(t)=\Pi_\eta^u(t)\zeta(t)$ satisfies 
	\begin{equation*}
	\begin{aligned}
	\zeta^u(t)&=\varphi_\eta(t-s,\Theta_s\omega_{\tau})\Pi^u_\eta(s)\zeta_0  \\
	& \ \ \ \ + \ \int_s^t \varphi_\eta(t-r,\Theta_r\omega_{\tau})\Pi^u_\eta(r)h_\eta^u(\Theta_r\omega_{\tau},\zeta^u(r)+
	\Sigma_\eta^u (r,\zeta^u(r)))dr,
	\ t\leq s.
	\end{aligned}
	\end{equation*}
	Since $\{\varphi_\eta(t-s,\Theta_s\omega_{\tau}): t\geq s\}$ admits an exponential dichotomy, due to
	Gr\"onwall's inequality, we obtain
	\begin{equation*}
	\|\zeta^u(t)\|_X\leq M_\eta e^{(\alpha_\eta-\rho M_\eta(1+L) )(t-s)}\|\zeta^u(s)\|_X, \ \ t\leq s.
	\end{equation*}
	Also, since
	$\|\Sigma_\eta^u(t,\zeta^u(t))\|_X\leq L\|\zeta^u(t)\|_X$, $t\in \mathbb{R}$, we have that
	\begin{equation}\label{eq-boundedness-for-zeta-in-W^u}
	\|\zeta(t)\|_X\leq M_\eta^2(1+L)e^{(\alpha_\eta-\rho M_\eta(1+L) )(t-s)}\|\zeta(s)\|_X, \ \ t\leq s.
	\end{equation}
	Then, taking $\delta'=\delta / M_\eta^2(1+L)$,
	 we see that
	\begin{equation}
	W^{u,\delta'}_{\eta}(0)(s)\subset
	\{ \Pi^u_\eta(s)z+ \Sigma^{u}_{\eta}(s,\Pi^u_\eta(s)z) \,:\, \|z\|_X\leq \delta\}.
	\end{equation}
	Finally, by the above argument, 
	we also conclude that there exists $\delta''\in (0,\delta')$ such that 
	\begin{equation}
	\{\Pi^u_\eta(s)z+ \Sigma^{u}_{\eta}(s,\Pi^u_\eta(s)z) \,:\,  \|z\|_X\leq \delta''\} \subset \\
	W^{u,\delta'}_{\eta}(0)(s).
	\end{equation}
	The proof is complete.
	\end{proof}

\begin{remark}
	We observe that, as in Theorem \ref{th-existence-hyperbolic-solutions-complete}[Item (2)], using $\Theta$-invariant functions $\bar{\eta}:\mathbb{R}\times \Omega\to (0,1]$ it is possible to conclude existence of local unstable manifolds of the random hyperbolic solutions $\xi_{\bar{\eta}}^*$ for the nonautonomous random dynamical systems $\psi_{\bar{\eta}}$.
\end{remark}

\par We reinforce that these results on the existence and continuity of local unstable sets are the key to obtain lower semicontinuity and topological structural stability, as we will see in the following sections.

\begin{remark}
	We can obtain similar results concerning the existence and continuity of local stable sets following similar arguments to those presented here and \cite{Carvalho-Langa-2} for the deterministic case. 
\end{remark}

\section{Continuity of nonautonomous random attractors}
\label{sec:continuity-of-NR-Attractors}

\par In this section, we prove the continuity of attractors in the situation that the perturbed system is nonautonomous random whereas the limiting is an autonomous dynamical system which has an attractor given as union of unstable sets of hyperbolic equilibria. 

\par First, we recall the definition of continuity of sets in Banach space $X$.	
Let $\{\mathcal{A}_\eta\}_{ \eta\in [0,1]}$ be a family of subsets of a Banach space $X$. We say that 
	$\{\mathcal{A}_\eta\}_{ \eta\in [0,1]}$ is 
	\begin{itemize}
		\item[\textbf{(1)}] \textbf{Upper semicontinuous} at $\eta=0$ if 
		$\lim_{\eta\to 0} dist_H(A_\eta,A_0)=0$.
		\item[\textbf{(2)}] \textbf{Lower semicontinuous} at $\eta=0$ if 
		$\lim_{\eta\to 0} dist_H(A_0,A_\eta)=0$.
		\item[\textbf{(3)}] \textbf{Continuous} at $\eta=0$ if it is upper and lower semicontinuous at $\eta =0$.
	\end{itemize}
	Let $\Lambda$ be a nonempty set. 
	We say that $\{\mathcal{A}_\eta(\lambda): \lambda\in \Lambda\}_{ \eta\in [0,1]}$ is \textbf{upper (lower) semicontinuous at $\eta=0$} if 
	$\{\mathcal{A}_\eta(\lambda)\}_{ \eta\in [0,1]}$ is upper (lower) semicontinuous at $\eta=0$, for each $\lambda\in \Lambda$, see \cite[Chapter 3]{Carvalho-Langa-Robison-book}.

\par Now, we present a result on the continuity of attractors, as a consequence of a careful study of their internal structure, presented in the previews sections.

\begin{theorem}[Continuity of nonautonomous random attractors]
	\label{th-continuity-nonautonomous-random-attractors}
	Let $\mathcal{T}_0=\{T_0(t):t\geq 0\}$ be the semigroup associated to \eqref{eq-autonomous-semilinear-ODE} and $(\psi_\eta,\Theta)$ be the nonautonomous dynamical systems associated to \eqref{eq-nonautonomous-semilinear-randomODE}, and assume that condition \eqref{eq-condition-f_eta-f_0} is satisfied.
	Additionally, suppose that 
	\begin{itemize}
		\item[\textbf{(a)}] For each $\eta\in [0,1]$, the co-cycle $(\psi_\eta,\Theta)$ has a nonautonomous random attractor $\{\mathcal{A}_\eta(\omega_\tau): \omega_\tau\in \mathbb{R}\times \Omega\}$
		and 
		\begin{equation*}
			\overline{\bigcup_{t\in \mathbb{R}} \bigcup_{\eta\in [0,1]} \mathcal{A}_\eta(\Theta_t\omega_\tau)} \hbox{ is compact,} \ \ \forall \, \omega_{\tau}\in \mathbb{R}\times \Omega;
		\end{equation*}
		\item[\textbf{(b)}] $\mathcal{T}_0=\{T_0(t):t\geq 0\}$ is a semigroup with global attractor
		given by 
		\begin{equation}\label{eq-grad-strucuture-limit}
		\mathcal{A}_0=\bigcup_{j=1}^p W^u(y_j^*),
		\end{equation} 
		for which all the equilibria $\{y^*_j: 1\leq j\leq p\}$ are hyperbolic.
	\end{itemize}
	Then given $\epsilon_0>0$ suitable small, there exists a $\Theta$-invariant function $\eta_{0}:\mathbb{R}\times \Omega\to (0,1]$ such that, for each $\omega_{\tau}$ fixed, the following hold:
	\begin{enumerate}
		\item For any $\eta \in (0,\eta_0(\omega_{\tau})]$ and $j\in \{1,\cdots,p\}$, there exists a hyperbolic solution $\xi_{j,\eta}^*$ of $\{\psi_\eta(t-s,\Theta_s\omega_{\tau}): t\geq s\}$ 
		with
		\begin{equation}
		\sup_{j} \sup_{t\in \mathbb{R}} \|\xi_{j,\eta}^*(\Theta_{t}\omega_\tau)-y_j^*\|_X<\epsilon_0,
		\end{equation}
		where 
		the linearized associated evolution process  
		admits an exponential dichotomy with family of projections $\{\Pi_{j,\eta}^u(s): s\in \mathbb{R}\}$.
		\item There exists
		$\delta_0(\omega_{\tau})>0$,
		where $\delta_0$ is $\Theta$-invariant and independent of $\eta$, 
		such that 
		for each $\omega_{\tau}$, $j\in \{1,\cdots,p\}$, and
		$\eta\in [0,\eta_0(\omega_\tau)]$, 
		there exists 
		a map
		\begin{equation}
		\mathbb{R}\times B_X(0,\delta_0(\omega_{\tau}))\ni (s,z)\mapsto \Sigma^u_{j,\eta}(s,z):=\Sigma^{u}_{j,\eta}(s, \Pi^u_{j,\eta}(s)z),
		\end{equation}
		with the property:
		given $\delta\in (0,\delta_0(\omega_{\tau}))$, there exists $0<\delta''<\delta'<\delta$, 
		\begin{equation}
		\begin{split}
		&\{\xi_{j,\eta}^*(s)+\Pi^u_{j,\eta}(s)z+ \Sigma^{u}_{j,\eta}(s,\Pi^u_{j,\eta}(s)z) \,:\,  \|z\|_X\leq \delta''\} \subset \\
		&W^{u,\delta'}_{j,\eta}(\xi_{j,\eta}^*)(s)\subset \\
		&\{\xi_{j,\eta}^*(s)+ \Pi^u_{j,\eta}(s)z+ \Sigma^{u}_{j,\eta}(s,\Pi^u_{j,\eta}(s)z) \,:\, \|z\|_X\leq \delta\};
		\end{split}
		\end{equation}
		 \item The family of graphs of $\{\Sigma_{j,\eta}^u\}_{\eta\in [0,\eta_{0}(\omega_{\tau})]}$ 
		is continuous at $\eta=0$ as in Theorem \ref{th-existence-continuity-local-unstable-set}[Item (4)], for each $j\in \{1,\cdots,p\}$.
		\item For each $\omega_{\tau}$, the family of pullback attractors
		$\{\mathcal{A}_\eta(\Theta_t\omega_{\tau}): t\in \mathbb{R}\}_{\eta\in [0,\eta_{0}(\omega_{\tau})]}$ is continuous at $\eta=0$.
	\end{enumerate}
	In particular, we have continuity of nonautonomous random attractors in the following sense: 
	given $\epsilon>0$, there exists a $\Theta$-invariant function
	$\eta_{\epsilon}\leq \eta_0$ such that,  
	for every $\Theta$-invariant function $\bar{\eta}$, with
	$\bar{\eta}\leq \eta_{\epsilon}$, we have
	\begin{equation}
	\sup_{t\in \mathbb{R}}d_H(\mathcal{A}_{\bar{\eta}}(\Theta_t\omega_{\tau}),\mathcal{A}_0 )<\epsilon, \ \ 
	\forall \, \omega_{\tau}\in \mathbb{R}\times \Omega,
	\end{equation}
	where $\{\mathcal{A}_{\bar{\eta}}(\omega_{\tau}): \omega_{\tau}\in \mathbb{R}\times \Omega\}$ is the nonautonomous random attractor of 
	$(\psi_{\bar{\eta}},\Theta)$ and 
	$d_H(A,B)=\max\{dist_H(A,B),dist_H(B,A)\}$, for $A,B\subset X$.
\end{theorem}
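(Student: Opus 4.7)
The plan is to derive Items (1)--(3) essentially as book-keeping corollaries of the two structural theorems already proved, and then spend the real effort on Item (4), which contains both the upper and the lower semicontinuity of the pullback attractors $\{\mathcal{A}_\eta(\Theta_t\omega_\tau):t\in\mathbb{R}\}$. For Item (1), I would apply Theorem \ref{th-existence-hyperbolic-solutions-complete} separately to each hyperbolic equilibrium $y_j^*$ to obtain $\Theta$-invariant thresholds $\eta_{0,j}$ and random hyperbolic solutions $\xi^*_{j,\eta}$; then set $\eta_0(\omega_\tau)=\min_{1\leq j\leq p}\eta_{0,j}(\omega_\tau)$, which is again $\Theta$-invariant. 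Items (2) and (3) are then immediate from Theorem \ref{th-existence-continuity-local-unstable-set} applied around each $\xi^*_{j,\eta}$ using the change of variables that reduces \eqref{eq-nonautonomous-semilinear-randomODE} to \eqref{eq-equation-around-zero-equilibria}; the common $\Theta$-invariant size $\delta_0(\omega_\tau)$ is obtained by taking the minimum of the $p$ sizes provided by Theorem \ref{th-existence-continuity-local-unstable-set}.

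For the upper semicontinuity half of Item (4), I would run the standard contradiction argument: if it failed, one could produce $\eta_n\to 0$ and $x_n\in\mathcal{A}_{\eta_n}(\Theta_t\omega_\tau)$ with $\mathrm{dist}(x_n,\mathcal{A}_0)\geq\epsilon_0$. By assumption (a) one extracts a limit $x_\infty$, and the invariance of each $\mathcal{A}_{\eta_n}$ allows one to construct backwards orbits $\zeta_n(\cdot)$ through $x_n$ that, via \eqref{eq-condition-f_eta-f_0} and continuous dependence of the evolution processes on parameters, converge on compact time-intervals to a bounded global solution of $\mathcal{T}_0$; this global solution lies in $\mathcal{A}_0$, contradicting $\mathrm{dist}(x_\infty,\mathcal{A}_0)\geq\epsilon_0$.

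The main obstacle, as usual, is the lower semicontinuity. Using (b), any $z\in\mathcal{A}_0$ lies on $W^u(y_j^*)$ for some $j$, so there is a deterministic global solution $\zeta:\mathbb{R}\to X$ of $\mathcal{T}_0$ with $\zeta(0)=z$ and $\zeta(s)\to y_j^*$ as $s\to-\infty$. Fixing $\epsilon>0$ and $\delta\in(0,\delta_0(\omega_\tau))$, I would choose $s_0<0$ so negative that $\zeta(s_0)\in W^{u,\delta''}_0(y_j^*)(s_0)$, i.e.\ $\zeta(s_0)=y_j^*+\Pi_{j,0}^u z_0+\Sigma^u_{j,0}(\Pi_{j,0}^u z_0)$ for some $\|z_0\|_X\leq\delta''$. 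The graph continuity of Item (3) then produces, for all sufficiently small $\eta$ (depending on $\omega_\tau$), a point
\[
w_\eta(s_0):=\xi^*_{j,\eta}(\Theta_{s_0}\omega_\tau)+\Pi_{j,\eta}^u(s_0)z_0+\Sigma^u_{j,\eta}(s_0,\Pi_{j,\eta}^u(s_0)z_0)\in W^{u,\delta}_{j,\eta}(\xi^*_{j,\eta})(s_0)\subset\mathcal{A}_\eta(\Theta_{s_0+\tau}\omega),
\]
with $\|w_\eta(s_0)-\zeta(s_0)\|_X$ arbitrarily small. Pushing forward by $\psi_\eta(-s_0,\Theta_{s_0}\omega_\tau)$ lands inside $\mathcal{A}_\eta(\omega_\tau)$ by invariance; by continuity of the co-cycle with respect to $(\eta,x)$ on the fixed finite interval $[s_0,0]$, this image is close to $\zeta(0)=z$, giving $\mathrm{dist}(z,\mathcal{A}_\eta(\omega_\tau))<\epsilon$. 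A compactness argument over $\mathcal{A}_0$ (which is compact) upgrades this pointwise estimate to $\mathrm{dist}_H(\mathcal{A}_0,\mathcal{A}_\eta(\omega_\tau))<\epsilon$.

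The final uniform-in-$t$ statement is where $\Theta$-invariance pays off. Since $\eta_\epsilon$ is chosen $\Theta$-invariant, if $\bar\eta\leq\eta_\epsilon$ is $\Theta$-invariant, Theorem \ref{th-existence-hyperbolic-solutions-complete}(2) yields the random hyperbolic solution $\xi^*_{j,\bar\eta}$ with $\sup_{t\in\mathbb{R}}\|\xi^*_{j,\bar\eta}(\Theta_t\omega_\tau)-y_j^*\|_X<\epsilon$; the same $\Theta$-invariance transfers to $\delta_0$ and to the constants in the graph estimate \eqref{eq-local-unstable-as-graph}. Because the argument of the previous paragraph applied to $\omega_\tau$ replaced by $\Theta_t\omega_\tau$ produces bounds that depend only on the $\Theta$-invariant data $(\eta_\epsilon,\delta_0)$ and on the fixed deterministic time $s_0$ (which depends only on $\mathcal{A}_0$ and $\epsilon$), the estimate $d_H(\mathcal{A}_{\bar\eta}(\Theta_t\omega_\tau),\mathcal{A}_0)<\epsilon$ is uniform in $t\in\mathbb{R}$, which yields the displayed conclusion.
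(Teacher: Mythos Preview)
Your proposal is correct and follows essentially the same route as the paper: Items (1)--(3) are deduced from Theorems \ref{th-existence-hyperbolic-solutions-complete} and \ref{th-existence-continuity-local-unstable-set}, upper semicontinuity comes from the standard argument using hypothesis \textbf{(a)} together with the uniform-on-compacts convergence of $\psi_\eta$ to $T_0$ implied by \eqref{eq-condition-f_eta-f_0}, and lower semicontinuity is obtained by going backwards along an orbit in $W^u(y_j^*)$ into the local unstable graph, transferring via the graph continuity of Item (3) to the perturbed local unstable set, and then pushing forward by $\psi_\eta$ on a finite time interval. The only cosmetic difference is that the paper phrases lower semicontinuity via the sequential characterization (for each $x_0\in\mathcal{A}_0$ build $x_k\in\mathcal{A}_{\eta_k}(\Theta_t\omega_\tau)$ with $x_k\to x_0$), whereas you do the direct $\epsilon$-estimate and then invoke compactness of $\mathcal{A}_0$; these are equivalent formulations of the same argument.
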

\begin{proof}
	\par Note that, items (1)-(3) are consequences of Theorem 
	\ref{th-existence-hyperbolic-solutions-complete} and Theorem \ref{th-existence-continuity-local-unstable-set}, thus to conclude the proof
	we only need to prove Item (4).
	\par Note that, from 
	\eqref{eq-condition-f_eta-f_0}, 
	we are able to prove that  
	\begin{equation}\label{th-continuity-at-epsilon-0-thcontinuityattractors}
	\lim_{\eta\to 0} \sup_{t\in [0,T]} \sup_{s\in \mathbb{R}} 
	\sup_{\|z\|_X\leq r}
	\|\psi_\eta(t,\Theta_s \omega_\tau)z- T_0(t)z\|_X\to 0,
	\end{equation}
	for any $T,r>0$ and $\omega_{\tau}\in \mathbb{R}\times \Omega$.
	\par The proof of upper semicontinuity follows from standard arguments using
	\eqref{th-continuity-at-epsilon-0-thcontinuityattractors} and hypothesis
	\textbf{(a)},
	see \cite[Chapter 3]{Carvalho-Langa-Robison-book}, for pullback attractors, and \cite{Caraballo-Langa,Caraballo-Langa-Robinson-98,Bixiang-Wang-existence-upper} for random attractors. 
	\par Now, we prove lower semicontinuity using a characterization via sequences, see \cite[Lemma 3.2]{Carvalho-Langa-Robison-book}. In fact,
	let $\omega_{\tau}\in \mathbb{R}\times \Omega$, $t\in \mathbb{R}$, and $x_0\in \mathcal{A}_0$, we will show that there exist sequences $\eta_{k}\in (0,\eta_0(\omega_{\tau})]$, with $\eta_k\to 0$, and 
	$x_k\in \mathcal{A}_{\eta_{k}}(\Theta_t\omega_\tau)$ such that 
	$x_k\to x_0$ as $k\to +\infty$. 
	\par Indeed, from \eqref{eq-grad-strucuture-limit}, $x_0\in W^u(y^*_j)$ for some $j\in \{1,\cdots,p\}$.
	By Item (3) of
	Theorem \ref{th-existence-continuity-local-unstable-set}, there exist $0<\delta''<\delta'<\delta_0(\omega_{\tau})$ such that
	\begin{eqnarray}
	& &W^{u,\delta''}_0(y_j^*)\subset \{y_j^*+\Pi^u_{j,0}z+ \Sigma^{u}_{0}(\Pi^u_{j,0}z) \,: \,  \|z\|_X\leq \delta'\}, \hbox{ and } \\
	& &\{\xi_{j,\eta}^*(r)+\Pi^u_{j,\eta}(r)z+ \Sigma^{u}_{j,\eta}(r,\Pi^u_{j,\eta}(r)z) \,: \, \|z\|_X\leq \delta'\}
	\subset W^{u,\delta_0}_{\eta}(\xi_{j,\eta}^*)(r),\label{eq-local-unstable-contains-graph}
	\end{eqnarray}
	for every $r\in \mathbb{R}$ and $\eta\in (0,\eta_{0}(\omega_{\tau})]$.
	Thus there exists a global solution $\zeta:\mathbb{R}\to X$ 
	of $\mathcal{T}_0$ such that
	$\zeta(0)=x_0$ and $\zeta(-s)\in W^{u,\delta''}_0(y_j^*)$, for some $s\geq 0$.
	\par Since
	$\zeta(-s)\in \{y_j^*+\Pi^u_{j,0}z+ \Sigma^{u}_{j,0}(\Pi^u_{j,0}z), \ \  \|z\|_X\leq \delta'\}$, by Theorem \ref{th-existence-continuity-local-unstable-set}[Item (4)],
	there exist $\{\eta_k\} \subset (0,\eta_{0}(\omega_{\tau})]$ and 
	$z_k\in \{\xi^*_{j,\eta_k}(t-s)+\Pi^u_{j,\eta_k}(t-s)z+ \Sigma^{u}_{j,\eta_k}(t-s,\Pi^u_{j,\eta_k}(t-s)z): \  \|z\|_X\leq \delta{'}\}$ with 
	$\eta_k\to 0$ and $z_k\to \zeta(-s)$ as $k\to +\infty$.
	\par By \eqref{eq-local-unstable-contains-graph} and Proposition \ref{prop-properties-Wu}, we see that $x_k=\psi_{\eta_k}(t-(t-s),\Theta_{t-s}\omega_{\tau})z_k\in \mathcal{A}_{\eta_{k}}(\Theta_{t}\omega_{\tau})$, for all $k\in \mathbb{N}$. 
	Then, we use \eqref{th-continuity-at-epsilon-0-thcontinuityattractors} 
	and that $\lim_k z_k= \zeta(-s)$, to guarantee that $\lim_k x_k= x_0$, and the proof is complete.
	\end{proof}

	\begin{remark}
		\par Theorem \ref{th-continuity-nonautonomous-random-attractors} can be extended to the case where the limit is nonautonomous. The key steps for the proof will be again the $\Theta$-invariance of the maps involved. 
		
	\end{remark}

	\begin{remark}\label{remark-colectiv-asympt-compact}
		Alternatively, Assumption \textbf{(a)} can be replaced by the following two conditions:
		\begin{itemize}
			\item[\textbf{(a.1)}] For each $\eta\in [0,1]$, the co-cycle $(\psi_\eta,\Theta)$ has a nonautonomous random attractor $\{\mathcal{A}_\eta(\omega_\tau): \omega_\tau\in \mathbb{R}\times \Omega\}$
			and 
			\begin{equation*}
				\overline{\bigcup_{t\in \mathbb{R}} \bigcup_{\eta\in [0,1]} \mathcal{A}_\eta(\Theta_t\omega_\tau)} \hbox{ is bounded,} \ \ \forall \, \omega_{\tau}\in \mathbb{R}\times \Omega;
			\end{equation*}
			\item[\textbf{(a.2)}] The family $\{\psi_\eta,\Theta\}_{\eta\in [0,1]}$ is \textbf{collectively asymptotic compact} in X, i.e., 
			for all $\omega_{\tau}$, the sequence 
				\begin{equation*}
				\{\psi_{\eta_n}(t_n,\Theta_{-t_n}\omega_{\tau})x_n\} \hbox{ has a convergent subsequence in } X
				\end{equation*}
				whenever $\eta_n\to 0$, $t_n\to +\infty$, and $\{x_n\}$ is a bounded sequence in $X$,
		\end{itemize}
	and Theorem \ref{th-continuity-nonautonomous-random-attractors} will still hold true. This will be the case when applying this result for damped wave equations, see Subsection \ref{subsec-app-dff-equations-wave}.
	\end{remark}

	\begin{remark}\label{remark-uniformly-bounded-noises-continuity}
		\par Theorem \ref{th-continuity-nonautonomous-random-attractors} is not optimal in the sense that we can not obtain the limit
		\begin{equation}\label{eq-continuity-attractors-uniformly-in-omega_tau}
		\sup_{\omega_{\tau}\in \mathbb{R}\times \Omega}
		d_H (\mathcal{A}_\eta(\omega_{\tau}), \mathcal{A}_0)\to 0, \ \ \hbox{ as } \eta \to 0.
		\end{equation}	
		To obtain this conclusion one should
		assume
		\begin{equation}\label{eq-condition-f_eta-f_0-uniformly-on-Omega_R}
		\sup_{\omega_{\tau}\in \mathbb{R}\times \Omega}\sup_{x\in B(0,r)}\Big\{\|f_\eta(\omega_\tau,x)-f_0(x)\|_X+
		\|(f_\eta)_x(\omega_\tau,x)-f_0^\prime(x)\|_{\mathcal{L}(X)}\Big\}
		\stackrel{\eta\to 0}{\to} 0,
		\end{equation}
		for all $r\geq 0$, instead of \eqref{eq-condition-f_eta-f_0}.
%
		In this case, it is possible to obtain the conclusions of 
		Theorem \ref{th-continuity-nonautonomous-random-attractors} with
		$\eta_0>0$ and $\delta_0>0$ independent of $\omega_{\tau}$, and therefore to conclude
		\eqref{eq-continuity-attractors-uniformly-in-omega_tau}. Note that this case is similar to the deterministic case, see \cite[Theorem 3.1]{Carvalho-Langa-Robinson}.
		\par However, in the applications to check 
		condition \eqref{eq-condition-f_eta-f_0-uniformly-on-Omega_R} 
		one has to assume that the noise is uniformly bounded as in Remark \ref{remark-example-bounded}, see also \cite{Bobryk-21,Caraballo-Colucci-Cruz-Rapaport-20} for more examples of uniformly bounded noises.
		On the other hand, in Section \ref{sec-app-dff-equations} we provide an example, see Example \ref{example-gradient-finite-dimensional}, where conditions of Theorem \ref{th-continuity-nonautonomous-random-attractors} are checked, but we do not know if 
		its possible to verify \eqref{eq-condition-f_eta-f_0-uniformly-on-Omega_R}.
	\end{remark}

	\par Now, that the study of continuity of attractor is complete, the next step is to prove that the gradient structure is preserved under nonautonomous random perturbation.


\section{Topological structural stability}
\label{sec:top-struc-stab}

\par In this section, we present a result on the topological structural stability of attractors for
nonautonomous random dynamical systems. 
We study co-cycles
$(\psi_\eta,\Theta)$ obtained by nonautonomous random perturbations of 
a gradient semigroup $\{T_0(t): t\geq 0\}$.

\par First, we recall some basic concepts necessary to define \textit{dynamically gradient evolution processes}. Assume that 
	 $\mathcal{S}=\{S(t,s): t\geq s\}$ is an evolution process with a pullback attractor $\{\mathcal{A}(t):t\in \mathbb{R}\}$.    
	\par Let $\widehat{E}=\{E(t): t\in \mathbb{R}\}$ be an invariant 
	family for $\mathcal{S}$. 
	Given a family of open sets
	$\widehat{U}=\{U(t): t \in \mathbb{R}\}$ 
	such that $\widehat{E}\subset \widehat{U}$ (i.e., $E(t)\subset U(t)$, for every $t\in \mathbb{R}$) 
	we say that $\widehat{E}$ is \textbf{the maximal invariant in} $\widehat{U}$ if given an invariant family $\widehat{F}$ in $\widehat{U}$, then $\widehat{F}\subset \widehat{E}$. 
	If there is a $\epsilon_0>0$ such that 
	$\widehat{E}$ is the maximal invariant family in
	$\{O_{\epsilon_0}(E(t)) : t\in \mathbb{R}\}$, we say that $\widehat{E}$ is a \textbf{isolated invariant family}.
	We say that $\{\widehat{E}_1,\cdots, \widehat{E}_p\}$ is a \textbf{disjoint collection of isolated invariant families} if $\widehat{E}_i$ is an isolated invariant family for every 
	$0\leq i\leq p$ and there is ${\epsilon_0}>0$ such that 
	$O_{\epsilon_0}(E_j(t))\cap O_{\epsilon_0}(E_i(t))=\emptyset$,
	for $i\neq j$ and every $t\in \mathbb{R}$.
	A \textbf{homoclinic structure} in $\{\widehat{E}_1,\cdots, \widehat{E}_p\}$
	is a subcollection $\{\widehat{E}_{l_1},\cdots, \widehat{E}_{l_k}\}$, with $k\leq p$, and a 
	set of global solutions
	$\{\zeta_1,\cdots,\zeta_k\}$
	of $(\psi,\Theta)$ in $\mathcal{A}$ which, setting
	$\widehat{E}_{l_k+1}=\widehat{E}_{k_1}$, satisfy 
	\begin{equation}
	\lim_{t\to -\infty} 
	d(\zeta_i(t) , E_{l_i}(t))=0,
	\hbox{ and }
	\lim_{t\to +\infty} 
	d(\zeta_i(t) , E_{l_{i+1}}(t))=0,
	\end{equation}
	for each $1\leq i\leq k$, and there exists a $\epsilon>0$ such that
	\begin{equation}
	\sup_{t\in \mathbb{R}}d(\,\zeta_i(t), \bigcup_{i=1}^p O_\epsilon (E_{l_i}(t)) \, )
	>0, \ \forall \, 1\leq i\leq k, \hbox{ and } t\in \mathbb{R}\times \Omega.
	\end{equation}

\begin{definition}
	We say that $\mathcal{S}=\{S(t,s): t\geq s\}$ is a \textbf{dynamically gradient evolution process} with respect to $\{\widehat{E}_1,\cdots, \widehat{E}_p\}$ if 
	\begin{itemize}
		\item \textbf{(G1)} If $\zeta:\mathbb{R}\to X$ is a global solution of
		$\mathcal{S}$ such that $\zeta(t)\in \mathcal{A}(t)$,
		then there exist $i,j\in \{1,\cdots,p\}$ such that 
		\begin{equation}
		\lim_{t\to -\infty} 
		d(\zeta(t) , E_{i}(t))=0,
		\hbox{ and }
		\lim_{t\to +\infty} 
		d(\zeta(t) , E_{j}(t))=0.
		\end{equation}
		\item  \textbf{(G2)} $\{\widehat{E}_1,\cdots, \widehat{E}_p\}$ does not admit any homoclinic structure.
	\end{itemize}
\end{definition}
\par This notion of dynamically gradient was studied for random dynamical systems in
\cite{Caraballo-Langa-Liu-12,Ju-QI-JWang-18}. For topological structural stability of deterministic autonomous or nonautonomous dynamical systems,
see \cite{Aragao-Costa-2013,Bortolan-Cardoso-etal,Carvalho-Langa-1}.

\par Now, we present our result on the topological structural stability for random dynamical systems.

\begin{theorem}\label{th-topological-structure-stability} 
	Assume that hypotheses of Theorem \ref{th-continuity-nonautonomous-random-attractors} are fulfilled and additionally assume that $\mathcal{T}_0=\{T_0(t-s):t\geq s\}$ is a gradient evolution process with respect to $\{y_1^*,\cdots,y_p^*\}$, 
	where $y^*_j$ is hyperbolic, for every $: 1\leq j\leq p$.
	\par Then, 
	there exists a $\Theta$-invariant function $\eta_{1}:\mathbb{R}\times \Omega\to (0,1)$ 
	such that for each $\omega_{\tau}$ fixed
the evolution process
$\{\psi_\eta(t-s,\Theta_{s}\omega_{\tau}): t\geq s\}$ is
dynamically gradient with respect to $\{\xi_{1,\eta}^*,\cdots,\xi_{p,\eta}^*\}$, $\forall \eta \leq \eta
_1(\omega_{\tau}) $. 
Consequently, 
\begin{equation}
\mathcal{A}_\eta(\Theta_t\omega_\tau)= \bigcup_{j=1}^p W^u_\eta(\xi_{j,\eta}^*;\omega_{\tau})(t),  \forall\, \eta\in [0,\eta_1(\omega_{\tau})].
\end{equation}
\end{theorem}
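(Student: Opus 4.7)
The plan is to verify directly the two defining conditions (G1) and (G2) of a dynamically gradient evolution process for $\{\psi_\eta(t-s,\Theta_s\omega_\tau):t\geq s\}$ with respect to $\{\xi_{1,\eta}^*,\ldots,\xi_{p,\eta}^*\}$, for each fixed $\omega_\tau$ and sufficiently small $\eta$. The final identity $\mathcal{A}_\eta(\Theta_t\omega_\tau)=\bigcup_j W^u_\eta(\xi_{j,\eta}^*;\omega_\tau)(t)$ then follows from (G1) together with Proposition \ref{prop-properties-Wu}, which already supplies the inclusion $W^u_\eta(\xi_{j,\eta}^*;\omega_\tau)(t)\subset \mathcal{A}_\eta(\Theta_t\omega_\tau)$.

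First I would fix $\epsilon_0>0$ so that the balls $B(y_j^*,\epsilon_0)$ are pairwise disjoint and each $y_j^*$ is isolated therein. By Theorems \ref{th-existence-hyperbolic-solutions-complete}, \ref{th-existence-continuity-local-unstable-set} and \ref{th-continuity-nonautonomous-random-attractors}, there is a $\Theta$-invariant map $\eta_0$ such that, for every $\eta\le \eta_0(\omega_\tau)$, each hyperbolic solution $\xi^*_{j,\eta}$ exists and stays within $\epsilon_0$ of $y_j^*$, the associated local unstable sets are given as graphs and the family behaves continuously as $\eta\to 0$. An analogous statement (using the same techniques applied backwards in time) gives local stable sets with a uniform rate of attraction, and the joint convergence \eqref{th-continuity-at-epsilon-0-thcontinuityattractors} provides convergence $\psi_\eta\to T_0$ uniform on compact time intervals and on bounded sets.

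To verify (G1), I argue by contradiction. Fix $\omega_\tau$ and suppose there exist sequences $\eta_k\downarrow 0$, global solutions $\zeta_k\subset\mathcal{A}_{\eta_k}$ of $\{\psi_{\eta_k}(t-s,\Theta_s\omega_\tau):t\ge s\}$, and points $t_k\in\mathbb{R}$ with $d(\zeta_k(t_k),\xi^*_{j,\eta_k}(t_k))\ge\epsilon_0/2$ for every $j$, say as $t_k\to-\infty$ (the $+\infty$ case is symmetric). Translating in time, using Assumption \textbf{(a)} (or \textbf{(a.2)}) of Theorem \ref{th-continuity-nonautonomous-random-attractors} to obtain relative compactness, and passing to subsequences with the uniform convergence in \eqref{th-continuity-at-epsilon-0-thcontinuityattractors}, I can extract a global solution $\zeta_0\subset\mathcal{A}_0$ of $\mathcal{T}_0$. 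The gradient property of $\mathcal{T}_0$ says $\zeta_0(t)\to y_i^*$ as $t\to-\infty$ for some $i$, so $\zeta_0$ enters the ball $B(y_i^*,\delta'')$ from Theorem \ref{th-existence-continuity-local-unstable-set}. The continuity of local unstable sets in Item (4) of Theorem \ref{th-existence-continuity-local-unstable-set}, together with the backward exponential attraction \eqref{eq-expontial-attraction-backwards} onto these graphs, forces $\zeta_k$ itself to lie eventually in the local unstable set of $\xi^*_{i,\eta_k}$ and hence to satisfy $\zeta_k(t)\to\xi^*_{i,\eta_k}(t)$ as $t\to-\infty$ for large $k$, contradicting the choice of $\zeta_k$. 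A symmetric argument using local stable sets handles the forward limit.

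For (G2), assume for contradiction that for a sequence $\eta_k\downarrow 0$ the perturbed systems admit homoclinic structures $\{\zeta_1^k,\ldots,\zeta_{m_k}^k\}$. Since $m_k\le p$ we may pass to a subsequence with constant length and constant index pattern. A diagonal extraction combined with \eqref{th-continuity-at-epsilon-0-thcontinuityattractors} and the uniform compactness in $\mathcal{A}_0$ produces a homoclinic structure for $\mathcal{T}_0$ among $\{y_1^*,\ldots,y_p^*\}$; the quantitative separation condition survives the limit because the hyperbolic solutions $\xi^*_{j,\eta_k}$ stay within $\epsilon_0$ of $y_j^*$. This contradicts the gradient property of $\mathcal{T}_0$. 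I then set $\eta_1(\omega_\tau)$ as the minimum of $\eta_0(\omega_\tau)$ and the thresholds extracted from the two contradiction arguments; the $\Theta$-invariance follows from the $\Theta$-invariance of all ingredients ($\eta_0$, $\delta_0$, the exponential dichotomy constants $\alpha_\eta,M_\eta$) and from the fact that the orbit $\{\Theta_s\omega_\tau:s\in\mathbb{R}\}$ produces the same co-cycle data when starting at $\Theta_s\omega_\tau$.

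The main obstacle is precisely this uniform control along the full orbit $\Theta_s\omega_\tau$: the contradiction arguments are naturally pointwise in $\omega_\tau$, while the conclusion demands a $\Theta$-invariant threshold. The mechanism that saves us is that every structural quantity produced by Theorems \ref{th-existence-hyperbolic-solutions-complete}--\ref{th-existence-continuity-local-unstable-set} (hyperbolicity constants, sizes of local (un)stable sets, Lipschitz constants of $\Sigma^u_{j,\eta}$) is already $\Theta$-invariant, so the various $\epsilon_0,\delta_0$-estimates at the base point $\omega_\tau$ transfer unchanged to every $\Theta_s\omega_\tau$. Once (G1) and (G2) are established, the desired identity for $\mathcal{A}_\eta(\Theta_t\omega_\tau)$ is immediate: any $z\in\mathcal{A}_\eta(\Theta_t\omega_\tau)$ lies on a bounded global solution which, by (G1), is asymptotic backwards to some $\xi^*_{i,\eta}$, so $z\in W^u_\eta(\xi^*_{i,\eta};\omega_\tau)(t)$.
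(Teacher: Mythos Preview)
Your approach is correct in spirit but takes a genuinely different route from the paper. The paper's proof is considerably shorter: it establishes one key local claim---that any global solution in the attractor which stays within $\delta'$ of $\xi_{j,\eta}^*$ for all $t\le t_0$ (respectively $t\ge t_0$) must actually converge to $\xi_{j,\eta}^*$ as $t\to -\infty$ (respectively $+\infty$)---using the exponential attraction \eqref{eq-expontial-attraction-backwards} onto the local unstable (respectively stable) graph inside the $\delta_0(\omega_\tau)$-neighborhood. With that claim in hand, the paper simply invokes an abstract stability-of-gradient-structure result, \cite[Theorem 8.14]{Bortolan-Carvalho-Langa-book}, to obtain (G1) and (G2) for the perturbed process.

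You instead verify (G1) and (G2) directly, via sequential compactness and contradiction along $\eta_k\downarrow 0$, extracting limit solutions of $\mathcal{T}_0$. This is essentially re-deriving the content of the cited theorem. It is a valid alternative, but note that your (G1) argument still hinges on the paper's key local claim: once a translate of $\zeta_k$ enters a small neighborhood of $\xi^*_{i,\eta_k}$, you need the ``staying close implies converging'' step (via \eqref{eq-expontial-attraction-backwards}) to finish, and this step requires knowing $\zeta_k$ remains in that neighborhood for \emph{all} earlier times, not just at one instant where convergence to $\zeta_0$ is observed. The paper isolates precisely this step and outsources the global verification of (G1)--(G2); your argument is more self-contained but needs more care at that point (making explicit why the limit $\zeta_0$ forces $\zeta_k$ to be trapped in the local unstable graph for all $t\le t_0$). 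What your route buys is independence from the external reference; what the paper's route buys is brevity and a clean separation between the local hyperbolic estimate and the combinatorial gradient argument.
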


\begin{proof}
	\par Let $\omega_{\tau}\in \mathbb{R}\times \Omega$ be fixed and
	$\eta\in (0,\eta_0(\omega_{\tau})]$. 
	Let us prove the following claim:
	there exists $\delta'\in (0,\delta_0(\omega_{\tau}))$ such that, if $\zeta_\eta:\mathbb{R}\to X$ is
	a global solution in $\{\mathcal{A}_\eta(\Theta_t\omega_{\tau}):t\in \mathbb{R}\}$ so that
	\begin{equation}\label{eq-condition-top-structure-stab}
	\|\zeta_\eta(t)- \xi_{j,\eta}^*(t)\|_X<\delta', \ \ \forall\, 
	t\leq t_0 \ \ (t\geq t_0), \hbox{ for some } t_0\in \mathbb{R},
	\end{equation}
	then 
	$\|\zeta_\eta(t)-\xi_{j,\eta}^*(t)\|_X\stackrel{t\to-\infty}\longrightarrow 0$ 
	$ \  (\|\zeta_\eta(t)- \xi_{j,\eta}^*(t)\|_X\stackrel{t\to+\infty}\longrightarrow 0)$.
	\par We prove only the backwards case, the proof of the forward case will be similar using the analogous results for the stable sets. 
	First, note that 
	$\tilde{\zeta}(t)=\zeta_\eta(t)- \xi_{j,\eta}^*(t)$, for $t\in \mathbb{R}$, $j\in \{1,\cdots,p\}$, and $\eta\in (0,\eta_{0}(\omega_{\tau})]$, thus we analyze the dynamics around the solution 
	$z=0$ of \eqref{eq-equation-around-zero-equilibria}.
	From Theorem \ref{th-existence-continuity-local-unstable-set}[Item (3)], there exists $0<\delta'<\delta<\delta_0(\omega_{\tau})$ such that
	\begin{equation}\label{eq-graph-proof-top-struc-stabil}
	\{\Pi^u_{j,\eta}(s)z+ \Sigma^{u}_{j,\eta}(s,\Pi^u_{j,\eta}(s)z) \,: \,  \|z\|_X\leq \delta'\} \subset W^{u,\delta}_{\eta}(0)(s), \forall\, s\in \mathbb{R}.
	\end{equation}
	Thus, \eqref{eq-condition-top-structure-stab} implies that $\tilde{\zeta}(t)$ is
	inside the $\delta_0(\omega_{\tau})$-neighborhood for all $t\leq t_0$. 
	\par Hence, from \eqref{eq-expontial-attraction-backwards} applied in the $\delta_0(\omega_{\tau})$-neighborhood of $z=0$, we must have that 
	$\tilde{\zeta}(t_0)\in \{\Pi^u_{j,\eta}(t_0)z+ \Sigma^{u}_{j,\eta}(t_0,\Pi^u_{j,\eta}(t_0)z) \,: \,  \|z\|_X\leq \delta'\}$.
	Therefore, from \eqref{eq-graph-proof-top-struc-stabil}, 
	$\tilde{\zeta}(t_0)\in W^{u,\delta}_{\eta}(0)(t_0)$ and the proof of the claim is complete.
	\par In this way, the proof will be a consequence of  
	\cite[Theorem 8.14]{Bortolan-Carvalho-Langa-book}.
	\end{proof}

\begin{remark}\label{remark-uniformly-bounded-noises-top-struct-stab}
	Note that, if we assume \eqref{eq-condition-f_eta-f_0-uniformly-on-Omega_R} in Theorem \ref{th-continuity-nonautonomous-random-attractors} instead of 
	\eqref{eq-condition-f_eta-f_0}, 
	we obtain 
	$\eta_1>0$, independent of $\omega_{\tau}$, such that 
	$\{\psi_\eta(t-s,\Theta_{s}\omega_{\tau}): t\geq s\}$ is a
	dynamically gradient evolution process with respect to $\{\xi_{1,\eta}^*,\cdots,\xi_{p,\eta}^*\}$, $\forall \eta \leq \eta
	_1$.
	In this case, this notion of dynamically gradient is compatible with the notion that appears in 
	\cite[Definition 4.17]{Caraballo-Langa-Liu-12}. 
\end{remark}

\begin{remark}
	 We believe that with the techniques employed in this paper it is also possible to obtain geometric structural stability, i.e., to show that Morse-Smale is stable under nonautonomous random perturbations and that there will be phase diagram isomorphism between the perturbed attractors and the limiting attractor, as we see in the deterministic case \cite[Chapter 12]{Bortolan-Carvalho-Langa-book}. This will be pursued in a future work.
\end{remark}


\section{Applications to differential equations}
\label{sec-app-dff-equations}
In this section, we present two applications. We first consider a semilinear differential equation with a small nonautonomous multiplicative white noise, and then  
 we study the effect of a small bounded noise in the damping of a damped wave equation. 
\subsection{Stochastic differential equations}\label{subsec-stochastic-diff-equations}
\par We consider the following family of stochastic 
	differential equations with a nonautonomous multiplicative 
white noise
\begin{equation}\label{eq-applications-stochastic-perturbation}
dy=Bydt+f(y)dt +\eta\kappa_ty\circ dW_t, \ t\geq \tau, \ y(\tau)=y_\tau,
\end{equation}
where $B$ is a generator of a $C^0$-semigroup 
$\{e^{Bt}: t\geq 0 \}$ on $X$, the family
$\{W_t:t\in \mathbb{R}\}$ is the standard Wiener process, see \cite{Arnold,Caraballo-Han-book}, and
$\kappa:\mathbb{R}\rightarrow \mathbb{R}$ is continuously differentiable, and
$\eta>0$. Equation \eqref{eq-applications-stochastic-perturbation} was considered
in \cite{Caraballo-Carvalho-Langa-OliveiraSousa-2021} to study hyperbolicity. 
Next, we will modify problem \eqref{eq-applications-stochastic-perturbation} to see it as a nonautonomous random differential equation satisfying the conditions of our results on the continuity and topological structure stability of attractors. 
\par The canonical sample space of a Wiener process is
$\Omega:=C_0(\mathbb{R})$ the set of continuous functions over $\mathbb{R}$ which are 
$0$ at $0$ equipped with the compact open topology. We denote $\mathcal{F}$ the associated Borel $\sigma$-algebra. Let $\mathbb{P}$ be the Wiener probability measure on 
$\mathcal{F}$ which is given by the distribution of a two-sided Wiener process with trajectories in $C_0(\mathbb{R})$. The flow $\theta$ is given by the Wiener shifts
\begin{equation*}
\theta_t\omega(\cdot)=\omega(t+\cdot)-\omega(t), \ t\in \mathbb{R}, \ \omega\in \Omega.
\end{equation*}

\begin{lemma}\label{Lemma-sublinear-growth}
	\par Consider the following scalar stochastic differential equation 
	\begin{equation}\label{eq-orstein-uhlenbeck-sde}
	dz_t+zdt=dW_t.
	\end{equation}
	There exists a $\theta$-invariant subset 
	$\tilde{\Omega}\in \mathcal{F}$ of full measure such that
	$\lim_{t\to \pm\infty}\frac{|\omega(t)|}{t}=0, \ \omega\in \tilde{\Omega}$
	and, for such $\omega$, the random variable given by
	\begin{equation*}
	z^*(\omega)=-\int_{-\infty}^0 e^{ s}\omega(s)ds
	\end{equation*}
	is well defined. Moreover,
	for $\omega\in \tilde{\Omega}$, the mapping
	$(t,\omega)\mapsto z^*(\theta_t\omega)$
	is a stationary solution of \eqref{eq-orstein-uhlenbeck-sde}
	with continuous trajectories, and 
	\begin{equation}\label{eq-sublinear-growth-OU}
	\lim_{t\to \pm\infty}\frac{|z^*(\theta_t\omega)|}{t}=0, \ \forall \,\omega\in \tilde{\Omega}.
	\end{equation}
\end{lemma}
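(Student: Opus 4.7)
The plan is to assemble this lemma from three essentially standard facts about Brownian paths and the stationary Ornstein--Uhlenbeck process: the almost-sure sublinear growth of $\omega$, the representation of the stationary OU solution as an exponentially weighted integral of the noise path, and a direct estimate transferring sublinear growth from $\omega$ to $z^{*}\!\circ\theta$.

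First I would identify $\tilde\Omega$ as the set of $\omega\in C_0(\mathbb{R})$ for which $\omega(t)/t\to 0$ as $t\to\pm\infty$. The strong law of large numbers for two-sided Brownian motion gives $\mathbb{P}(\tilde\Omega)=1$, and the computation $(\theta_s\omega)(t)/t = \omega(t+s)/t - \omega(s)/t$ shows that membership in $\tilde\Omega$ is preserved under every shift $\theta_s$, so $\tilde\Omega$ is $\theta$-invariant. On $\tilde\Omega$ the sublinear bound $|\omega(s)|\le\varepsilon(1+|s|)+C_\varepsilon(\omega)$ makes $e^{s}\omega(s)$ absolutely integrable on $(-\infty,0]$, which gives the well-definedness of $z^{*}(\omega)$.

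Next I would verify the solution property. The change of variable $r=s+t$ in the identity $z^{*}(\theta_t\omega) = -\int_{-\infty}^0 e^s[\omega(s+t)-\omega(t)]\,ds$ produces the clean pathwise representation
\begin{equation*}
z^{*}(\theta_t\omega) \;=\; \omega(t) - \int_{-\infty}^t e^{r-t}\omega(r)\,dr,
\end{equation*}
which is the integration-by-parts form of the It\^o integral $\int_{-\infty}^t e^{-(t-s)}\,dW_s$. Either a direct differentiation in $t$ (for fixed $\omega\in\tilde\Omega$) or an application of Fubini to the integrated identity $z^{*}(\theta_t\omega) + \int_0^t z^{*}(\theta_s\omega)\,ds = \omega(t)$ then shows that $t\mapsto z^{*}(\theta_t\omega)$ satisfies the Stratonovich OU equation \eqref{eq-orstein-uhlenbeck-sde}, and continuity in $t$ follows from the continuity of $\omega$ together with dominated convergence inside the integral.

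Finally, to obtain \eqref{eq-sublinear-growth-OU} I would use the same representation rewritten as $z^{*}(\theta_t\omega) = \omega(t) - \int_0^\infty e^{-u}\omega(t-u)\,du$ and, given $\varepsilon>0$, insert the bound $|\omega(s)|\le\varepsilon(1+|s|)+C_\varepsilon(\omega)$ to obtain $|z^{*}(\theta_t\omega)|/|t|\le\varepsilon + o(1)$ as $|t|\to\infty$; letting $\varepsilon\downarrow 0$ concludes the proof. I do not anticipate a deep obstacle here: the only genuine subtlety is the bookkeeping needed to ensure that all required properties hold \emph{simultaneously} on a single $\theta$-invariant full-measure set, and since the sublinear-growth set is already shift-invariant this reduces to a routine check rather than a probabilistic obstruction.
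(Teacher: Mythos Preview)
Your proposal is essentially correct and constitutes a complete proof sketch, whereas the paper does not actually prove this lemma at all: it simply cites \cite[Lemma 4.1]{Caraballo-Kloeden-Schmalfu} and moves on. So there is no ``paper's own proof'' to compare against beyond that reference, and your argument is exactly the standard one found there.

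One small slip worth flagging: the integrated identity you state, $z^{*}(\theta_t\omega) + \int_0^t z^{*}(\theta_s\omega)\,ds = \omega(t)$, is missing the initial term $-z^{*}(\omega)$; the correct form of the integrated SDE is $z^{*}(\theta_t\omega) - z^{*}(\omega) + \int_0^t z^{*}(\theta_s\omega)\,ds = \omega(t)$. This is purely a bookkeeping error and does not affect the validity of your approach.
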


For the proof of Lemma \ref{Lemma-sublinear-growth} see \cite[Lemma 4.1]{Caraballo-Kloeden-Schmalfu}.

\par Let $y$ be a solution for 
\eqref{eq-applications-stochastic-perturbation} and consider 
$v(t,\omega):=e^{-\eta\kappa_tz^*(\theta_t\omega)}y(t,\omega)$. Hence, $v$ has to satisfy the following nonautonomous random differential equation  
\begin{equation}\label{eq-modified-NRDE-Stochastic-ODE}
\dot{v} =Bv+e^{-\eta\kappa_tz^*(\theta_t\omega)}
f(e^{\eta\kappa_tz^*(\theta_t\omega)}v)
+\eta [\kappa_t-\dot{\kappa}_t] z^*(\theta_t\omega)v,
\end{equation}
Define $f_\eta(t,\omega,v):=e^{-\eta\kappa_tz^*(\omega)}
f(e^{\eta\kappa_tz^*(\omega)}v)+\eta [\kappa_t-\dot{\kappa}_t] z^*(\omega)v$.

\par Since the mapping $t\mapsto z^*(\theta_t\omega)$ has a sublinear growth, due to \eqref{eq-sublinear-growth-OU}, it is possible to choose a differential real function $\kappa$ for which there are random variables $m_1,m_2>0$ such that
\begin{equation*}
m_1(\omega):=\sup_{t\in \mathbb{R}}\{|\kappa_tz^*(\theta_t\omega)|\}<\infty, \hbox{ and }
m_2(\omega):=
\sup_{t\in \mathbb{R}}\{|[\kappa_t-\dot{\kappa}_t]z^*(\theta_t\omega)|\}<\infty.
\end{equation*}
\par Thus, using arguments similar to those of \cite[Section 3.3]{Caraballo-Carvalho-Langa-OliveiraSousa-2021}
we prove that the family $\{f_\eta: \eta\in [0,1]\}$ satisfies \eqref{eq-condition-f_eta-f_0}.
\par At this point, one can choose any gradient semigroup associated to 
$\dot{y}=By+f(y)$ and consider the perturbation 
$\eta\kappa_ty\circ dW_t$ and apply our results to the modified differential equation 
\eqref{eq-modified-NRDE-Stochastic-ODE}. In particular:

\begin{example}\label{example-gradient-finite-dimensional}
	Let $F:\mathbb{R}^N\to \mathbb{R}$ be a smooth real-valued function and 
	$f(x)=-\nabla F(x)$, $x\in \mathbb{R}^N$, and consider
	\begin{equation*}
	\dot{x}=f(x) +\eta \kappa_t x\circ dW_t, \ t>0.
	\end{equation*}
	When $\eta=0$ this is called a gradient system.
	Then we obtain the nonautonomous random differential equations
	\begin{equation}\label{eq-example-gradient-finite}
	\dot{x}=e^{-\eta\kappa_tz^*(\theta_t\omega)}
	f(e^{\eta\kappa_tz^*(\theta_t\omega)}x)
	+\eta [\kappa_t-\dot{\kappa}_t] z^*(\theta_t\omega)x, \ \eta\in [0,1].
	\end{equation}
	\par Assume that there exists $R_0,\sigma>0$ such that
	\begin{equation}
	f(x)\cdot x <-\sigma, \hbox{ for all }|x|\geq R_0,
	\end{equation}
	and that the set 
	$\{x\in \mathbb{R}^N: f(x)=0\}$
	is finite and consist only in hyperbolic equilibria.
	Then, $\dot{x}=f(x)$ is globally well posed and its associated with a semigroup 
	$\{T_0(t): t\geq 0\}$, which is gradient with respect to
	$\{x_1^*,\cdots,x_p^*\}$. 
	\par Then, the nonautonomous random dynamical systems associated to \eqref{eq-example-gradient-finite}
	have attractors $\{A_\eta(\omega_{\tau}): \omega_{\tau}\in \mathbb{R}\}$, and 
	this family of attractors satisfies the conclusions of Theorem 
	\ref{th-continuity-nonautonomous-random-attractors} and Theorem \ref{th-topological-structure-stability}.
\end{example}

\subsection{An application to partial differential equation}
	\label{subsec-app-dff-equations-wave}
	Now, we provide an application for a damped wave equation.
	\par Consider the damped wave equation 
	\begin{equation}\label{eq-autonomous-damped-wave-equation}
	u_{tt}+\beta u_t-\Delta u=f(u), \hbox{ in } D
	\end{equation}
	with boundary condition $u=0$, in $\partial D$, where $D$ be a bounded smooth domain in $\mathbb{R}^3$, and 
	$\beta\in (0,+\infty)$. For $f:\mathbb{R}\to \mathbb{R}$ we assume that 
	\begin{equation}\label{eq-conditions-wave-equation}
	f\in C^2(\mathbb{R}), \ \max\{|f^\prime(s)|,|f^{\prime \prime}(s)|\}\leq c(1+|s|^p), \ 
	\end{equation}
	for some $c>0$ and $p< 2$. 
	Now, we consider a small random perturbation on the damping,
		\begin{equation*}
			u_{tt}+\beta_\eta(\Theta_{t}\omega) u_t-\Delta u=f(u), \hbox{ in } D.
		\end{equation*}
	where $\beta_\eta(\omega_{\tau}):=\beta+\eta|\kappa_\tau z^*(\omega)|$, $\eta\in [0,1]$, $\omega_\tau\in \mathbb{R}\times  \Omega$, for some $\kappa$ such that $\sup_{t\in \mathbb{R}}\{|\kappa_tz^*(\theta_t\omega)|\}<\infty$, 
	Thus, there exists two $\Theta$-invariant maps 
	$b_0,b_1:\mathbb{R}\times \Omega\to (0,+\infty)$ such that
	$b_0(\omega_{\tau})\leq \beta_\eta(\Theta_t\omega_{\tau})\leq b_1(\omega_{\tau})$,
	for every $\omega_{\tau}\in \mathbb{R}\times\Omega$.
	\par The initial data will be taken in
	the space $X=H^1_0(D)\times L^2(D)$. 
	Hence, we obtain the family of abstract evolutionary equations in $X$
	\begin{equation}\label{example-eq-evolutionary-wave-eq}
	\dot{y}=B_\eta (\Theta_t \omega_{\tau}) y+ F(y), \ \eta\in [0,1]
	\end{equation}
	where
	\begin{equation*}
	y=\begin{pmatrix}
	u\\
	v 
	\end{pmatrix}\in X, \ \ 
	B_\eta(\omega_{\tau})=\begin{pmatrix}
	0 & I \\
	-A & -\beta_\eta(\omega_{\tau})
	\end{pmatrix}, \ \ 
	F(y)=\begin{pmatrix}
	0 \\
	f^e(u) 
	\end{pmatrix},
	\end{equation*}
	where $A:D(A)\subset L^2(D)\to L^2(D)$ is $-\Delta$ with Dirichlet boundary condition,
	with $f^e:H_0^1(D)\to L^2(D)$ is given by $f^e(y_1)(x)=f(y_1(x))$ for $x\in D$. 
	Thus, conditions \eqref{eq-conditions-wave-equation} implies local and global well-posedness and  
	that $f^e$ is continuously differentiable, see \cite{Arrieta-Carvalho-Hale-92} or \cite[Chapter 15]{Carvalho-Langa-Robison-book} for details.
	\par Consider the functional 
	$V:H_0^1(D)\times L^2(D)\to \mathbb{R}$ given by
	\begin{equation}
	V_0(u,v)=
	\frac{1}{2}\int_D |\nabla u|^2 \, +\frac{\beta}{2} \int_D v^2 - \int_D G(u),
	\end{equation}
	where $G(u)(x)=\int_{0}^{u(x)} f(s) \, ds$.
	Thus $V_0$ is a Lyapunov function relative to the set 
	of equilibria for \eqref{eq-autonomous-damped-wave-equation}, which we assume that is finite.
	The hyperbolic equilibrium points of \eqref{eq-autonomous-damped-wave-equation} are of the form
	$y_0^*=(u_0^*,0)$ where $u_0^*$ is a solution of 
	$-\Delta u=f(u)$
	such that $0\notin \sigma(-\Delta+D_xf^e(u_0^*)Id_X)$.
	Thus \eqref{eq-autonomous-damped-wave-equation} is associated with a gradient semigroup 
	$\{T_0(t):t\geq 0\}$, see \cite{Brunovsky-Raugel-03} for conditions to obtain that this type of dynamics is generic on damped wave equations.
%
	\par For each $y_0\in X$, $\omega_\tau\in \mathbb{R}\times \Omega$, and $\eta\in [0,1]$ Equation \eqref{example-eq-evolutionary-wave-eq} possess a unique solution which can be written as 
	\begin{equation}
	\psi_\eta(t,\omega_\tau)y_0=\varphi_\eta(t,\omega_\tau)y_0+\phi_\eta(t,\omega_\tau)y_0, \ \ t\geq 0.
	\end{equation} 
	where $\{\varphi_\eta(t,\omega): t\in [0,+\infty), \, \omega\in \Omega\}$ is the solution operator of \eqref{example-eq-evolutionary-wave-eq} with $f=0$, and 
	\begin{equation}
	\phi_\eta(t,\omega_\tau)y_0=\int_{0}^{t} \varphi_\eta (t-s,\Theta_s \omega_\tau) F(\psi_\eta(s,\omega_\tau)y_0) ds.
	\end{equation}
	\par Towards the existence of attractors, we have the following lemma. 
	\begin{lemma}\label{lemma-example-damped-wave-eq}
		 There exists a bounded subset $B$ (independent of $(t,\omega)$) which pullback attracts at time $\tau\in \mathbb{R}$, for each $\tau\leq t$, every bounded subset of $X$
			under the action of $\{\psi_\eta(t-s,\Theta_s\omega_\tau): t\geq s\}$. In particular,
			$\{\psi_\eta(t-s,\Theta_s\omega_\tau): t\geq s\}$ is strongly pullback bounded dissipative, in the sense of \cite[Definition 2.10]{Caraballo-Carv-Langa-Rivero-10}.
%
			\par Furthermore, there are $K>0$ and a $\Theta$-invariant function 
			$\alpha:\mathbb{R}\times \Omega\to (0,+\infty)$, both independent of $\eta$, such that
			\begin{equation}
			\|\varphi_\eta(t,\omega_\tau)\|_{\mathcal{L}(X)}
			\leq K e^{-\alpha(\omega_{\tau}) t}, \  t\geq 0,
			\end{equation}
			and	$\phi_\eta(t,\omega_\tau)$ is a compact operator for every $(t,\omega_\tau)\in (0,+\infty)\times\mathbb{R}\times \Omega$. In particular,  $\psi_\eta$ is pullback asymptotically compact for each $\eta\in [0,1]$, in the sense of \cite[Definition 2.14]{Bixiang-Wang-existence}.
	\end{lemma}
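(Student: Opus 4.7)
The plan is to obtain the three claims by combining an energy/Lyapunov argument in the perturbed (Dafermos-type) functional, a direct multiplier estimate for the linear cocycle, and the subcritical growth $p<2$ for the compactness of the nonlinear part. I would work throughout with a modified energy of the form
\begin{equation*}
E_\lambda(t) = \tfrac{1}{2}\|\nabla u(t)\|_{L^2}^2 + \tfrac{1}{2}\|v(t)\|_{L^2}^2 + \lambda(u(t),v(t))_{L^2},
\end{equation*}
for a small parameter $\lambda>0$ to be chosen in terms of the lower bound $b_0(\omega_\tau)$ on the damping. Since $b_0(\omega_\tau)\leq \beta_\eta(\Theta_t\omega_\tau)\leq b_1(\omega_\tau)$ holds uniformly in $t$ and $\eta$, for $\lambda$ small enough (depending only on $b_0$ and the Poincar\'e constant) $E_\lambda$ is equivalent to the squared norm in $X$, uniformly in $\eta\in[0,1]$ and in $t\in\mathbb{R}$.

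For the first claim, applying $\frac{d}{dt}E_\lambda$ along the full nonlinear equation \eqref{example-eq-evolutionary-wave-eq}, the damping produces a term $-(\beta_\eta(\Theta_t\omega_\tau)-\lambda)\|v\|^2 -\lambda\|\nabla u\|^2$, and the source produces $(1+\lambda) (f(u),v)+\lambda(f(u),u)$. The subcritical growth \eqref{eq-conditions-wave-equation} together with the embedding $H^1_0(D)\hookrightarrow L^{2p+2}(D)$ (valid for $p<2$ in dimension $3$) and a Young inequality reduces the equation to an inequality of the form $\dot E_\lambda \leq -c(\omega_\tau)E_\lambda + C(\omega_\tau)$, with $c,C$ depending on $\omega_\tau$ only through $b_0$ (hence $\Theta$-invariant). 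Grönwall then yields a pullback absorbing set $B$ of radius depending only on $b_0(\omega_\tau)$, which, because $b_0$ is $\Theta$-invariant, does not depend on $t$ and can in fact be taken independent of $\eta$. Strong pullback bounded dissipativeness in the sense of \cite[Definition 2.10]{Caraballo-Carv-Langa-Rivero-10} is then routine.

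For the second claim I would drop the nonlinear term, i.e.\ set $f\equiv 0$, and repeat the multiplier computation with the same perturbed energy $E_\lambda$. Now the calculation closes cleanly: choosing $\lambda=\lambda(\omega_\tau)$ small in terms of $b_0(\omega_\tau)$ and the first eigenvalue of $-\Delta$ gives $\dot E_\lambda \leq -\alpha(\omega_\tau)E_\lambda$ along solutions of the homogeneous problem, where $\alpha(\omega_\tau)$ depends only on $b_0(\omega_\tau)$ and therefore is $\Theta$-invariant. Since $E_\lambda$ and the $X$-norm squared are comparable with constants depending only on $b_0,b_1$, integrating this inequality yields
\begin{equation*}
\|\varphi_\eta(t,\omega_\tau)\|_{\mathcal{L}(X)}\leq K e^{-\alpha(\omega_\tau)t}, \quad t\geq 0,
\end{equation*}
with $K$ independent of $\eta$ (the only $\eta$-dependence in the linear equation is through $\beta_\eta\in[b_0,b_1]$, which is absorbed in the a priori bounds).

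For the third claim I would use the subcriticality $p<2$ to push $F$ into a space with some extra regularity. Specifically, $f^e$ maps $H_0^1(D)$ continuously into $H^s(D)$ for some small $s>0$ (one can verify this via the growth condition \eqref{eq-conditions-wave-equation} and interpolation, since $p<2$ leaves room below the Sobolev critical exponent). Then, using the variation-of-constants formula for $\phi_\eta$ and the smoothing/boundedness of the linear cocycle on the more regular space $Y := H^{1+s}(D)\cap H_0^1(D) \times H^s(D)$, one concludes that $\phi_\eta(t,\omega_\tau)$ takes bounded sets of $X$ into bounded sets of $Y$, and the compact embedding $Y\hookrightarrow X$ gives compactness. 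Combining with the exponential decay of $\varphi_\eta$ (so that $\varphi_\eta(t,\Theta_{-t}\omega_\tau)B\to 0$ as $t\to+\infty$), pullback asymptotic compactness in the sense of \cite[Definition 2.14]{Bixiang-Wang-existence} follows at once.

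The hardest step is likely the second one: one must verify that the exponential-decay rate $\alpha(\omega_\tau)$ coming from the multiplier estimate is genuinely $\Theta$-invariant, which forces the choice of $\lambda$ and the Young-inequality splittings to depend on $\omega_\tau$ \emph{only} through the $\Theta$-invariant quantities $b_0(\omega_\tau),b_1(\omega_\tau)$. This is what ultimately makes the whole scheme compatible with the abstract framework of Sections \ref{sec:continuity-of-NR-Attractors}--\ref{sec:top-struc-stab} (via Remark \ref{remark-colectiv-asympt-compact}).
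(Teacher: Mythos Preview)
Your overall strategy—perturbed energy for dissipativity, multiplier estimate for the linear decay, and the subcritical gap $p<2$ for compactness via a gain of regularity—is exactly the standard route the paper points to (the proof there is omitted and the reader is sent to \cite[Section~2.1]{Caraballo-Carv-Langa-Rivero-10} and \cite[Chapter~15]{Carvalho-Langa-Robison-book}), so in spirit you are doing what the paper intends.

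There is, however, a genuine gap in your treatment of the first claim. Your functional $E_\lambda = \tfrac12\|\nabla u\|^2 + \tfrac12\|v\|^2 + \lambda(u,v)$ does \emph{not} include the primitive $\int_D G(u)$, and without it the source term $(f(u),v)$ cannot be absorbed by Young's inequality: the growth assumption \eqref{eq-conditions-wave-equation} only yields $|(f(u),v)|\leq \epsilon\|v\|^2 + C_\epsilon\big(1+\|\nabla u\|^{2p+2}\big)$, and since $2p+2>2$ this super-quadratic term defeats the $-\lambda\|\nabla u\|^2$ coming from the damping. The standard argument (the one in the cited references, and signalled in the paper by the functional $V_0$) augments $E_\lambda$ by $-\int_D G(u)$, so that $\tfrac{d}{dt}\big(-\int_D G(u)\big) = -(f(u),v)$ cancels the problematic cross term; one then needs a dissipativity hypothesis on $f$ (e.g.\ $\limsup_{|s|\to\infty} f(s)/s<\lambda_1$) both to control $\lambda(f(u),u)$ and to bound the augmented functional from below by the $X$-norm. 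The paper is terse about this hypothesis but is implicitly importing it from the references it cites. A smaller point in the same step: the lemma asserts that $B$ is independent of $(t,\omega)$, not merely $\Theta$-invariant. This works here because $\beta_\eta(\omega_\tau)=\beta+\eta|\kappa_\tau z^*(\omega)|\geq \beta>0$ gives a \emph{deterministic} lower bound on the damping; your write-up should invoke this explicitly rather than stopping at $b_0(\omega_\tau)$.

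Your arguments for the exponential decay of $\varphi_\eta$ and for the compactness of $\phi_\eta$ are correct and in line with the standard treatment.
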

	The proof of Lemma \ref{lemma-example-damped-wave-eq} follows step by step the arguments presented in \cite[Section 2.1]{Caraballo-Carv-Langa-Rivero-10} (or see \cite[Chapter 15]{Carvalho-Langa-Robison-book} for more detailed proofs), thus it will be omitted.
	Thus there are nonautonomous random attractors
	$\{\mathcal{A}_\eta(\omega_{\tau}): \omega_{\tau}\in \mathbb{R}\times \Omega\}$ for 
	$(\psi_{\eta},\Theta)$ for all $\eta\in [0,1]$ satisfying Condition 
	\textbf{(a.1)} of Remark \ref{remark-colectiv-asympt-compact}, see \cite{Bixiang-Wang-existence,Caraballo-Carv-Langa-Rivero-10}. 
	Additionally, using arguments similar to those in \cite{Caraballo-Carv-Langa-Rivero-10}
	the family $\{(\psi_\eta,\Theta)\}_{\eta\in [0,1]}$
	is collectively pullback asymptotically compact at $\eta=0$. Therefore, conditions of Remark \ref{remark-colectiv-asympt-compact} are satisfied and
	it is possible to apply our results to conclude that the family of attractors behaves continuously (using Theorem \ref{th-continuity-nonautonomous-random-attractors}) and that 
	we have topological structural stability (using Theorem \ref{th-topological-structure-stability}). 

	\begin{remark}\label{remark-example-bounded}
		Instead of considering $\beta_\eta(\omega_{\tau}):=\beta+\eta|\kappa_\tau z^*(\omega)|$, we could have considered the following perturbation
		\begin{equation}
		\tilde{\beta}_\eta(\omega)=\beta+\eta \frac{2}{\pi}\arctan\circ z^*(\omega),
		\ \omega\in \Omega, 
		\end{equation}
		for $\beta\in (1,+\infty)$.
		For this perturbations a condition as \eqref{eq-condition-f_eta-f_0-uniformly-on-Omega_R} is verify for the symbol space $\Omega$ instead of $\mathbb{R}\times \Omega$.
		See also \cite{Caraballo-Cruz-21} where the authors study this type of perturbations. 
	\end{remark}

\section*{Acknowledgments}
We acknowledge the financial support from the following institutions: T. Caraballo and J. A. Langa by Ministerio de Ciencia, Innovaci\'on y Universidades (Spain), FEDER (European Community) under grant PGC2018-096540-B-I00, and by FEDER Andaluc\'{\i}a Proyecto I+D+i Programa Operativo US-1254251, and Proyecto PAIDI 2020 P20 00592; A. N. Carvalho by S\~ao Paulo Research Foundation (FAPESP) grant 2020/14075-6, CNPq grant 306213/2019-2, and FEDER - Andalucía P18-FR-4509; and A. N. Oliveira-Sousa by S\~ao Paulo Research Foundation (FAPESP) grants 2017/21729-0 and 2018/10633-4, and CAPES grant PROEX-9430931/D.

\bibliographystyle{abbrv}
\bibliography{references_CNRA}

\end{document}